\documentclass[a4paper]{article}
\usepackage{comment}
\usepackage{arxiv}
\usepackage[pagewise]{lineno}
\usepackage{bookmark}
\usepackage{multirow,multicol,diagbox,makecell}
\usepackage{booktabs}
\usepackage{graphicx}
\usepackage{algorithm}
\usepackage{algorithmic}
\usepackage{setspace}
\usepackage{amsmath,amsthm}
\usepackage{mathtools,esint,amssymb}
\usepackage{amsfonts}
\usepackage{enumerate}
\usepackage[numbers,sort]{natbib}
\usepackage{url}
\usepackage{xcolor}
\usepackage{tikz}
\usepackage{siunitx}
\usepackage{subcaption}
\usepackage{hyperref}
\usepackage[capitalise,nameinlink,noabbrev]{cleveref}
\usepackage{float}
\usepackage{placeins}

\usetikzlibrary{graphs, quotes, positioning, shapes.geometric, arrows.meta, fit, positioning, shapes.geometric, shapes.arrows}

\numberwithin{equation}{section}

\newcommand{\R}{\mathbb{R}}

\let\div\relax
\DeclareMathOperator{\div}{div} 
\DeclareMathOperator{\sgn}{sgn} 
\DeclareMathOperator{\Int}{Int} 

\newtheorem{theorem}{Theorem}
\newtheorem{remark}{Remark}
\newtheorem{lemma}{Lemma}
\newtheorem{proposition}{Proposition}
\newtheorem{definition}{Definition}

\title{Recovering the spatiotemporally dependent diffusion and advection coefficients in a pathway-based Keller--Segel model}

\renewcommand{\shorttitle}{Recovering the diffusion and advection coefficients in the Keller--Segel model}

\author{Yu'an Li$^{1,2}$, Lingyun Qiu$^{3,4}$, Min Tang$^{1,5}$, Hui Yu$^{6}$, Shenwen Yu$^{7}$, Siqin Zheng$^{8}$\\
$^1$ Institute of Natural Sciences, Shanghai Jiaotong University, Shanghai 200240, China\\
$^2$ School of Physics and Astronomy, Shanghai Jiaotong University, Shanghai 200240, China\\
(\href{mailto:yuan_li@sjtu.edu.cn}{yuan\_li@sjtu.edu.cn})\\
$^3$ Yau Mathematical Sciences Center, Tsinghua University, Beijing 100084, China\\
$^4$ Yanqi Lake Beijing Institute of Mathematical Sciences and Applications, Beijing 101408, China\\
(\href{mailto:lyqiu@tsinghua.edu.cn}{lyqiu@tsinghua.edu.cn})\\
$^5$ School of Mathematics, Shanghai Jiaotong University, Shanghai 200240, China\\
(\href{mailto:tangmin@sjtu.edu.cn}{tangmin@sjtu.edu.cn})\\
$^6$ School of Mathematics and Computational Science, Xiangtan University, Xiangtan 411105, Hunan, China\\
(\href{mailto:huiyu@xtu.edu.cn}{huiyu@xtu.edu.cn})\\
$^7$ Department of Mathematical Sciences, Tsinghua University, Beijing 100084, China\\
(\href{mailto:ysw22@mails.tsinghua.edu.cn}{ysw22@mails.tsinghua.edu.cn})\\
$^8$ Department of Mathematics, City University of Hong Kong, Kowloon, Hong Kong SAR, China\\
(\href{mailto:siqzheng@cityu.edu.hk}{siqzheng@cityu.edu.hk})
}

\begin{document}
\maketitle

\begin{abstract}
We investigate an inverse coefficient problem for an augmented Keller--Segel model arising in the description of phototactic and chemotactic population dynamics. The spatiotemporal evolution of the population density is governed by an advection-diffusion equation in which both the diffusion coefficient and the advection coefficient depend on space and time. Due to the underlying intracellular adaptation mechanism, even in a time-independent external environment, the diffusion coefficient \(D(x,t)\) and the scalar advection coefficient \(K(x,t)\) remain time-dependent and relax exponentially toward their respective steady-state profiles. Importantly, their relaxation is governed by the same rate, determined by the intracellular adaptation dynamics.
From internal measurements of the density, we establish conditional Lipschitz stability for recovering either coefficient when the other is known, and derive cross-sensitivity estimates quantifying the compensation between diffusion and advection perturbations that produce the same density data.
We further identify a finite-time separation between the transient and steady regimes: the full dynamics admit an exponentially accurate frozen-coefficient approximation at late times.
When the steady-state coefficients are known, we also prove unique identification of the relaxation rate from the long-time behavior of the solution.
Finally, we derive an explicit linearization of the forward map and, motivated by the temporal separation above, develop a two-stage gradient-based reconstruction strategy.
Numerical experiments demonstrate the feasibility of the proposed method.
\end{abstract}

\keywords{
    inverse problem, Keller--Segel model, stability analysis, adjoint state method
}

\noindent{\bfseries \emph{MSC Classification}}\enspace{35K20, 35R30, 65M32}

\section{Introduction}\label{sec:intro}


The Keller--Segel model provides a classical framework for describing the collective dynamics of microbial populations by integrating two key components: random diffusive motion and directed advective transport driven by external chemical or light gradients. This model has been widely adopted to characterize the spatiotemporal evolution of cell density distributions. 
In the present study, we focus on a pathway-based Keller--Segel model defined over a bounded spatial domain $\Omega\subset\mathbb{R}^2$. We write $\rho(x,t)$ for the microbial population density, where $x\in\Omega$ denotes the spatial position and $t>0$ denotes the time. For a prescribed external signal $I(x,t)$, the dynamics of the population density are governed by the following initial-boundary value problem: 
\begin{equation}\label{eq:model}
    \begin{cases}
        \partial_t \rho - \operatorname{div}(D(x,t)\nabla\rho) + \operatorname{div}(\rho \Gamma(x,t)) = 0 & \text{in}\,\ Q\coloneqq\Omega\times(0,\infty), \\
        \partial_\nu \rho = 0 & \text{on}\ \Sigma\coloneqq\partial\Omega\times(0,\infty), \\
        \rho(\cdot,0) = \varphi & \text{in}\ \ \Omega,
    \end{cases}
\end{equation}
where $D(x,t)$ and $\Gamma(x,t)$ represent the cell diffusion coefficient and drift vector, respectively, describing the cellular response to the given external signal $I(x,t)$, and $\nu$ denotes the unit outward normal vector to the domain boundary $\partial\Omega$. 
The corresponding total cell flux is $J = D\nabla\rho - \rho\Gamma$. We assume that the illumination is high in the interior of the container and induces no normal phototactic drift on the boundary, so that
\begin{equation}\label{eq:drift_boundary}
    \Gamma\cdot\nu = 0 \ \text{on}\ \Sigma.
\end{equation}
Under this assumption, the no-flux boundary condition $J\cdot\nu = 0$, which reflects the confinement of the algae within the closed container, reduces to the homogeneous Neumann condition $\partial_\nu \rho = 0$ used above.

Microbial cells detect and respond to environmental cues through complex intracellular signaling networks, which implies that the effective diffusion and advection coefficients depend on external signals in a highly nonlinear and indirect manner. 
Consequently, determining the dependence of $D(x,t)$ and $\Gamma(x,t)$ on $I(x,t)$ remains a nontrivial task. One strategy is a bottom-up approach, in which the dependence of both coefficients on external signals is derived from mechanistic descriptions of the underlying chemotactic or phototactic signaling pathways \cite{Li2016,Perthame2020}. Nevertheless, this approach requires detailed knowledge of intracellular molecular regulatory mechanisms, which has only been established for intensively studied model organisms such as \textit{E. coli} and is largely unavailable for most other microbial species. An alternative strategy is to infer these coefficients from population-level density measurements by solving inverse problems, thereby reconstructing the relationships between the coefficients and external signals. A major challenge in this endeavor arises from cellular adaptation processes: even when the external signal $I$ varies only in space, the effective diffusion and advection coefficients may still depend on both space and time, leading to highly dynamic and environment-dependent transport properties.

In this work, we investigate an inverse problem motivated by phototaxis experiments on algae, where the population is subjected to controlled light illumination within a container, represented by the bounded domain $\Omega\subset\R^2$.
Given spatiotemporal measurements of the algal population density 
\(\rho(x,t)\) within $\Omega\times(0,T)$, we aim to recover the model parameters
that characterize the phototactic response: the diffusion coefficient
\(D(x,t)\), reflecting the intensity of random motion,
and the advection vector \(\Gamma(x,t)\), encoding the intensity
and gradient of the outside signal.

The simultaneous recovery of $D(x,t)$ and $\Gamma(x,t)$ from a single density evolution is highly underdetermined in a completely general setting, since the scalar observation $\rho(x,t)$ is governed by an equation in which the scalar diffusion coefficient and the vector-valued drift are strongly coupled. We therefore restrict attention to a structured class motivated by the pathway-based Keller--Segel model and the long-time behavior of the coefficients. This framework is physically meaningful and experimentally tractable, while retaining parameters with clear interpretations in terms of the cellular response to illumination.

More precisely, the pathway-based Keller--Segel model and the adaptation mechanism lead, after an initial transient, to a reduced representation of the transport coefficients. For notational convenience, we take the beginning of this relaxation regime as $t=0$. The diffusion coefficient is represented as
\begin{equation*}
    D(x,t) = D_s(x) + e^{-\gamma t}[D_0(x) - D_s(x)],
\end{equation*}
while the drift field takes the form $\Gamma(x,t) = K(x,t)\nabla\Phi(x)$, with a prescribed illumination profile $\Phi$ and a scalar advection coefficient
\begin{equation*}
    K(x,t) = K_s(x) + e^{-\gamma t}[K_0(x) - K_s(x)].
\end{equation*}
Here $D_s$ and $K_s$ denote the steady-state coefficients, $D_0$ and $K_0$ denote the coefficients at the beginning of the reduced regime, and $\gamma>0$ is the relaxation rate. In this setting, the inverse problem reduces to the simultaneous recovery of the five parameters $D_0, D_s, K_0, K_s, \gamma$. This reduced structure retains the essential adaptation dynamics while making the inverse problem mathematically tractable.

\subsection{Literature review}

Reconstructing coefficients from solutions of governing partial
differential equations is a classical inverse problem,
with stability being a central issue:
it quantifies how measurement errors propagate into uncertainties
in the recovered parameters and thus determines the reliability
of the inversion.
For parabolic equations, stability has been extensively studied
using Carleman estimates
\cite{Imanuvilov2022,Imanuvilov2024,Imanuvilov2024a,Lin2022b}
and boundary control techniques \cite{Qiu2025}.
These works mainly concern reconstructions from boundary measurements
or from data supplemented by a single time snapshot.

The present problem instead involves internal measurements of the complete density evolution \(\rho(x,t)\) throughout the entire spatial domain \(\Omega\) over a time interval \((0,T)\).
Inverse problems with internal data typically exhibit enhanced stability
compared with those relying solely on boundary observations,
as shown in
\cite{Kuchment2012,Bal2013a,Bonito2017,Choulli2019a,Choulli2021,Chen2026a}.
A common requirement in such analysis is a nondegeneracy condition
on the solution or its gradient.
In the present scenario, the intrinsic properties of the model,
including the nonnegativity and the maximum principle
\cite{Lieberman1996,Perthame2015},
ensure that the measured density is strictly positive. 
While this property is beneficial for recovering the advection coefficient, 
the reconstruction of the diffusion coefficient remains challenging, 
because conditions involving $\nabla\rho$ naturally arise.

Even in the simpler case of a spatially dependent diffusion coefficient \(D(x)\), identification from internal measurements at only one or several fixed times remains challenging; see
\cite{Alessandrini2020}.
In one dimension, uniqueness and conditional stability
have been obtained via a heat equation transform
\cite{isakovIdentificationDiffusionCoefficient2000},
and Carleman-based conditional stability estimates
under additional gradient conditions were derived in
\cite{Yamamoto2009}.
For numerical recovery, error analysis using
Galerkin finite element methods
was developed in \cite{jinErrorAnalysisFinite2021}.
These studies concern either spatially dependent coefficients or observation settings different from ours. 

The simultaneous recovery of multiple coefficients, however, significantly increases the difficulty of the inverse problem.
The coupling between parameters may lead to severe ill-posedness, and stability may deteriorate substantially.
In certain configurations, the parameters can be decoupled
and reconstructed sequentially \cite{Cen2024,Pan2025}.
While for the general model, to the best of our knowledge, the simultaneous identification of a spacetime-dependent diffusion coefficient $D(x,t)$ and an advection coefficient $K(x,t)$ from full spatiotemporal internal measurements has not been theoretically investigated.

\subsection{Main results}

Our theoretical analysis consists of four main parts.
First, we investigate the conditional recovery of the diffusion coefficient $D$ and the scalar advection coefficient $K$ from the full internal observation of the population density. When $D$ is known, we establish a Lipschitz stability estimate for the recovery of $K$. Conversely, when $K$ is prescribed, we obtain a corresponding Lipschitz stability estimate for $D$ under an appropriate nondegeneracy condition on the spatial gradient of the measured density. The two reconstruction problems exhibit different analytical features: the recovery of $K$ benefits from the strict positivity of $\rho$, whereas the identification of $D$ necessarily involves $\nabla\rho$ and therefore requires additional control of its degeneracy.

Second, we analyze the cross-sensitivity between $D$ and $K$ when two coefficient pairs generate the same internal density. The difference of the two equations yields a compensation identity, from which we derive estimates controlling a perturbation of either coefficient in terms of the other. These estimates quantify the intrinsic diffusion--advection coupling in the simultaneous inverse problem and may also be viewed from the perspective of uncertainty quantification \cite{Ren2019,Ren2020}.

Third, we exploit the exponential coefficient structure to quantify the separation between the transient and asymptotic regimes in finite observation intervals. In a sufficiently late-time interval $[T_2,T]$, replacing the time-dependent coefficients by their steady states produces a forward-model error of order $e^{-\gamma T_2}$. In the complementary early-time interval $[0,T_1]$, perturbations of the steady coefficients affect the solution with the attenuation factor $1-e^{-\gamma T_1}$. These estimates directly show quantitatively that late-time observations are predominantly governed by the steady coefficients, while early-time dynamics are comparatively insensitive to moderate perturbations of them.

Fourth, we address the recovery of the common relaxation rate $\gamma$. Assuming that the steady-state coefficients $D_s$ and $K_s$ are known, we establish exponential convergence of the density to its steady state and show that $\gamma$ can be uniquely identified from the long-time decay of an observable residual relative to the steady-state operator.

For numerical reconstruction, we further derive the Fr\'echet derivatives of the forward map and the associated adjoint state equations; see \cite{plessixReviewAdjointstateMethod2006}.
The adjoint-state framework is widely used in inverse problems
\cite{isakovIdentificationDiffusionCoefficient2000,
riederMathematicalFoundationFull2025,
ammariPhasedPhaselessDomain2016a}.
Under H\"older-type stability assumptions,
error estimates of gradient-based methods for solving inverse problems
can be derived as in
\cite{dehoopLocalAnalysisInverse2012,
dehoopAnalysisMultilevelProjected2015}.

The theoretical results above suggest a natural temporal decomposition of the simultaneous reconstruction problem. We therefore develop a two-stage reconstruction strategy that exploits different observation windows. In Stage 1, the steady coefficients $D_s$ and $K_s$ are reconstructed from late-time data using a steady-coefficient model. In Stage 2, the reconstructed steady coefficients are fixed and the remaining transient parameters, including the initial coefficient profiles $D_0, K_0$ and the relaxation rate $\gamma$, are recovered from early-time observations. The finite-time estimates above quantify the temporal separation underlying this strategy, while the numerical experiments demonstrate its practical feasibility.

The remainder of this paper is organized as follows.
In \cref{sec:eq}, we introduce the pathway-based Keller--Segel model and derive the reduced structure of the diffusion and advection coefficients by analyzing their long-time behavior. 
We also recall basic results for the forward problem, including well-posedness and qualitative properties of solution.
In \cref{sec:ip}, we establish conditional stability and cross-sensitivity estimates for the diffusion and advection coefficients, quantify the finite-time separation between the transient and asymptotic regimes, and analyze the long-time identification of the relaxation rate.
We also derive the Fr\'echet derivatives of the forward map and their adjoint operators.
In \cref{sec:num}, motivated by these analytical results, we develop a two-stage reconstruction strategy and present the corresponding numerical results.
Finally, concluding remarks are given in \cref{sec:con}.

\section{The pathway-based Keller--Segel model}\label{sec:eq}

The classical continuum framework for describing population-level cell migration in heterogeneous environments is the Keller--Segel model. In its standard form, when the external signal \(I(x,t)\) is independent of time, the effective diffusion coefficient \(D\) and drift vector $\Gamma$ are usually assumed to depend only on space, so that \(D=D(x)\) and \(\Gamma=\Gamma(x)\). This assumption implicitly requires that the intracellular signaling and adaptation dynamics are much faster than the population-level motion. In other words, each cell is assumed to be fully adapted, with its intracellular chemical state instantaneously equilibrated to the local value of the external signal.

This quasi-steady assumption is not valid in many experimentally relevant situations. A number of experiments on both \textit{E. coli} and \textit{Euglena} exhibit delayed, or adaptation-dependent migration patterns that cannot be reproduced by the standard Keller--Segel model \cite{Zhu2012,Zhang2019}. Even when the external signal is stationary, cells may initially possess intracellular states that are not at the equilibrium associated with the local signal. Moreover, when \(I(x,t)\) changes on a timescale comparable to the intracellular adaptation time, the cellular response depends not only on the instantaneous signal but also on the history.

Since the measured dynamics retain information about the cells’ initial intracellular states and their previous exposure to the signal, the discrepancy cannot be removed simply by adjusting $D(x)$ and $\Gamma(x)$; it reflects a structural deficiency of the standard Keller--Segel closure, which eliminates the intracellular adaptation timescale altogether. To fit with the experimental data quantitatively, one must introduce an extended Keller--Segel model in which the effects of intracellular adaptation are incorporated through additional dynamical variables.

In \cite{Li2016,Lei}, by taking into account the effects of intracellular adaptation, a pathway-based Keller--Segel model is derived from a bottom-up approach starting from the individual-based model.
The collective movement of microbial populations is governed by the following equation:
\begin{equation}\label{eq:adv-diff-M}
    \begin{cases}
        \partial_t \rho - \operatorname{div}\bigl(\hat D(M)\nabla\rho\bigr) + \operatorname{div}\bigl(\rho \hat K(M)\nabla M\bigr) = 0 & \text{in }\, Q, \\
        \partial_t M(x,t) = \gamma\bigl(\hat\Phi(I)-M\bigr) & \text{in }\, Q, \\
       \partial_\nu \rho = 0 & \text{on } \Sigma, \\
       \rho(\cdot,0) = \varphi & \text{in }\ \Omega.
    \end{cases}
\end{equation}
Here we have changed the spatial-temporal dependent diffusion and advection coefficients $D(x,t)$ and $K(x,t)$ to ones with functional dependence of a new variable $M(x,t)$. It denotes the average methylation level for \textit{E. coli} (or the level of other internal proteins for other microbial species) and represents the slowest intracellular reaction within the chemotaxis/phototaxis signaling pathway. The diffusion and advection coefficients depend on $M$ in a nonlinear way that rely on the detailed intracellular signaling pathway.
The function \(\hat\Phi(I)\) is a complex nonlinear mapping of the external signal \(I\), which defines the equilibrium protein level for a given stimulus: \(I\) may correspond to light intensity for algal phototaxis or chemical concentration for \textit{E. coli} chemotaxis. The relaxation rate $\gamma$ is inherited from the single-cell relaxation rate rather than being an independent macroscopic parameter. It is obtained by the time scale over which individual cells adapt their intracellular signaling state, and by nondimensionalizations \cite{Li2016,Lei}.

Nevertheless, the explicit functional form of $\hat D(M)$, $\hat K(M)\nabla M$, \(\hat\Phi(I)\), the parameter \(\gamma\), and direct measurements of \(M\) are typically unavailable for most microbial species. Consequently, it is necessary to reduce the model's dependence on the internal variable \(M\).

\subsection{The reduced model}

Before addressing the more general and mathematically demanding case of a spatiotemporally varying light field \(I(x,t)\), it is useful to first consider a static but spatially heterogeneous signal \(I(x)\). This setting is not merely a convenient simplification. It already provides a physically meaningful framework in which the consequences of intracellular adaptation can be isolated from those caused by the explicit temporal variation of the external environment. In particular, even though the imposed light field remains fixed after its introduction, the cell population may exhibit strongly time-dependent transport behavior because the intracellular states of the cells require a finite time to adapt to the new local signal.

Denoting $\Phi(x) = \hat\Phi(I(x))$, one can solve the second equation in \eqref{eq:adv-diff-M} to derive
\begin{equation}\label{eq:M}
    M(x,t) = \Phi(x) + e^{-\gamma t}[M(x,0) - \Phi(x)].
\end{equation}
Note that as $t\to\infty$, $M$ tends to $\Phi$, which represents the steady state under the given external signal $I$. Then the diffusion and advection coefficients $\hat D(M), \hat K(M)\nabla M$ tend to $\hat D(\Phi), \hat K(\Phi)\nabla\Phi$ respectively. To simplify the following analysis and numerical reconstruction, we first investigate the long-time behavior of these coefficients.

\begin{proposition}
    Suppose that $\hat D, \hat K$ are $C^2$ functionals and $\Phi\in C^1(\overline{\Omega})$. Then for $t>t_0\gg 1$ it holds that
    \begin{align*}
        \hat{D}(M) &= \hat{D}(\Phi) + e^{-\gamma(t-t_0)}[\hat{D}(M(x,t_0)) - \hat{D}(\Phi)] + O(e^{-\gamma(t+t_0)}), \\
        \hat{K}(M)\nabla M &= \big[\hat{K}(\Phi) + e^{-\gamma(t-t_0)}\big(\hat{K}(M(x,t_0)) - \hat{K}(\Phi)\big)\big]\nabla\Phi + O(e^{-\gamma t}).
    \end{align*}
\end{proposition}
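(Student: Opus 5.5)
The plan is to work pointwise in $x$ and use the explicit formula \eqref{eq:M} for $M$, restarted at time $t_0$. Since $M$ solves a linear ODE with constant target $\Phi(x)$, we have for $t>t_0$
\begin{equation*}
    M(x,t)-\Phi(x) = e^{-\gamma(t-t_0)}\bigl[M(x,t_0)-\Phi(x)\bigr], \qquad M(x,t_0)-\Phi(x) = e^{-\gamma t_0}\bigl[M(x,0)-\Phi(x)\bigr].
\end{equation*}
So $\delta_0(x) \coloneqq M(x,t_0)-\Phi(x)=O(e^{-\gamma t_0})$ uniformly in $x$, assuming $M(\cdot,0)$ is bounded. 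Then $\delta(t)\coloneqq M(x,t)-\Phi(x) = e^{-\gamma(t-t_0)}\delta_0$.

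For the diffusion coefficient, I would Taylor expand $\hat D$ to second order about $\Phi(x)$, both at $M(x,t)$ and at $M(x,t_0)$:
\begin{equation*}
    \hat D(M(t)) = \hat D(\Phi)+\hat D'(\Phi)\delta(t)+O(\delta(t)^2), \qquad \hat D(M(t_0)) = \hat D(\Phi)+\hat D'(\Phi)\delta_0+O(\delta_0^2).
\end{equation*}
Multiplying the second expansion by $e^{-\gamma(t-t_0)}$ and subtracting, the linear terms cancel exactly. The remainder is $O(\delta(t)^2)+e^{-\gamma(t-t_0)}O(\delta_0^2)$. Since $\delta(t)^2 = e^{-2\gamma(t-t_0)}\delta_0^2\le e^{-\gamma(t-t_0)}\delta_0^2$, both pieces are $O(e^{-\gamma(t-t_0)}e^{-2\gamma t_0}) = O(e^{-\gamma(t+t_0)})$. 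Uniformity in $x$ of the $O$ constants comes from $C^2$ regularity of $\hat D$ on a compact range containing the values of $\Phi$ and $M$. This range is bounded because $\Phi\in C^1(\overline\Omega)$ and $M$ lies between $M(\cdot,0)$ and $\Phi$.

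For the advection term, the same argument applied to $\hat K$ gives
\begin{equation*}
    \hat K(M)=\hat K(\Phi)+e^{-\gamma(t-t_0)}\bigl[\hat K(M(t_0))-\hat K(\Phi)\bigr]+O(e^{-\gamma(t+t_0)}).
\end{equation*}
Differentiating \eqref{eq:M} in $x$ gives
\begin{equation*}
    \nabla M = \nabla\Phi + e^{-\gamma t}\bigl(\nabla M(x,0)-\nabla\Phi\bigr) = \nabla\Phi+O(e^{-\gamma t}),
\end{equation*}
which requires $M(\cdot,0)\in C^1$ (or at least a bounded gradient), alongside $\Phi\in C^1$. Multiplying the two expansions, the bracket is bounded, so the cross term is $O(e^{-\gamma t})$, and the $O(e^{-\gamma(t+t_0)})$ term is absorbed into $O(e^{-\gamma t})$. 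This gives the stated formula.

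The main obstacle is bookkeeping rather than depth. One must make sure the $O$ constants are uniform in $x$ and $t$, which needs bounds on $M(\cdot,0)$ and $\nabla M(\cdot,0)$ (implicit hypotheses I would state explicitly). One must also see that the gradient error is only $O(e^{-\gamma t})$ and does not inherit the extra $e^{-\gamma t_0}$ factor. That is why the second estimate is weaker than the first. Nothing beyond Taylor's theorem with remainder is needed.
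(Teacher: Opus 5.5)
Your proposal is correct and follows essentially the paper's argument. Both expand $\hat D(M)$ about $\Phi$ at times $t$ and $t_0$, eliminate the linear term $\hat D'(\Phi)[M(x,0)-\Phi]$ (the paper solves for it and substitutes, while you multiply by $e^{-\gamma(t-t_0)}$ and subtract, which is the same step), and then multiply the expansion of $\hat K(M)$ by $\nabla M=\nabla\Phi+e^{-\gamma t}(\nabla M(x,0)-\nabla\Phi)$. Your remarks on uniformity in $x$ and on the implicit boundedness of $M(\cdot,0)$ and $\nabla M(\cdot,0)$ make explicit what the paper leaves tacit.
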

\begin{proof}
    By Taylor's expansion it follows from \eqref{eq:M} that
    \begin{equation*}
        \hat{D}(M(x,t))
        = \hat{D}(\Phi) + e^{-\gamma t}\hat{D}'(\Phi)[M(x,0) - \Phi(x)] + O(e^{-2\gamma t}) \ \text{as}\ t\to\infty.
    \end{equation*}
    Taking a time $t_0$ such that $e^{-\gamma t_0} \ll 1$, we derive
    \begin{equation*}
        \hat{D}(M(x,t_0)) = \hat{D}(\Phi) + e^{-\gamma t_0}\hat{D}'(\Phi)[M(x,0) - \Phi(x)] + O(e^{-2\gamma t_0}),
    \end{equation*}
    so that
    \begin{equation*}
        \hat{D}'(\Phi)[M(x,0) - \Phi(x)] = e^{\gamma t_0}[\hat{D}(M(x,t_0)) - \hat{D}(\Phi)] + O(e^{-\gamma t_0}).
    \end{equation*}
    Thus, for $t>t_0$,
    \begin{equation*}
        \hat{D}(M(x,t)) = \hat{D}(\Phi) + e^{-\gamma(t-t_0)}[\hat{D}(M(x,t_0)) - \hat{D}(\Phi)] + O(e^{-\gamma(t+t_0)}).
    \end{equation*}

    For the advection vector, it follows in the same way that for $t>t_0$,
    \begin{align*}
        \hat{K}(M)\nabla M ={}& \big[\hat{K}(\Phi) + e^{-\gamma(t-t_0)}\big(\hat{K}(M(x,t_0)) - \hat{K}(\Phi)\big) + O(e^{-\gamma(t+t_0)})\big]\big[\nabla\Phi + e^{-\gamma t}\big(\nabla M(x,0)-\nabla\Phi\big)\big] \\
        ={}& \big[\hat{K}(\Phi) + e^{-\gamma(t-t_0)}\big(\hat{K}(M(x,t_0)) - \hat{K}(\Phi)\big)\big]\nabla\Phi + O(e^{-\gamma t}). \qedhere
    \end{align*}
\end{proof}

According to the long-time behavior, we fix a sufficiently large time $t_0$ and approximate the coefficients $\hat{D}(M(x,t))$ and $\hat{K}(M)\nabla M$ by $D(x,t)$ and $K(x,t)\nabla\Phi$ respectively by dropping the higher-order terms. The reduced coefficients are written as
\begin{equation}\label{eq:coeff}
    \begin{aligned}
        D(x,t) &= D_s(x) + e^{-\gamma(t-t_0)}[D_0(x) - D_s(x)], \\
        K(x,t) &= K_s(x) + e^{-\gamma(t-t_0)}[K_0(x) - K_s(x)]
    \end{aligned}
\end{equation}
for $t>t_0$, where
\begin{align*}
    D_s(x) &= \hat{D}(\Phi(x)), \quad D_0(x) = \hat{D}(M(x,t_0)), \\
    K_s(x) &= \hat{K}(\Phi(x)), \quad K_0(x) = \hat{K}(M(x,t_0)).
\end{align*}
We conclude this subsection with the reduced Keller--Segel model:
\begin{equation}\label{eq:adv-diff}
    \begin{cases}
        \partial_t \rho - \operatorname{div}(D(x,t)\nabla\rho) + \operatorname{div}(\rho K(x,t)\nabla\Phi(x)) = 0 & \text{in}\,\ Q, \\
        \partial_\nu \rho = 0 & \text{on}\ \Sigma, \\
        \rho(\cdot,0) = \varphi & \text{in}\ \ \Omega.
    \end{cases}
\end{equation}
For consistency with the assumption \eqref{eq:drift_boundary}, we assume that $\Phi$ satisfies the homogeneous Neumann boundary condition $\partial_\nu\Phi = 0$ on $\partial\Omega$.

\subsection{Preliminary results}

The rest of this section is devoted to presenting some useful results on the parabolic equation with Neumann boundary condition. In the following, we sometimes abbreviate a spatiotemporal function $u(x,t)$ by $u(t)$, which represents a spatial function for each time $t$, and denote its time-derivative by $u'(t)$. We use $C$ to denote generic positive constants in estimates.

First we define the weak solution and mention the well-posedness result for \eqref{eq:adv-diff} in a finite time interval $(0,T)$. Here we involve a source term, so that the results are useful for deriving the Fr\'echet derivatives of the forward map and their adjoint states in \cref{sec:ip}.
\begin{definition}
    We say that $\rho\in L^2(0,T;H^1(\Omega))$ with $\partial_t\rho\in L^2(0,T;H^1(\Omega)^*)$ is a weak solution of
    \begin{equation}\label{eq:with_source}
        \begin{cases}
            \partial_t \rho - \div(D\nabla\rho) + \div(\rho\Gamma) = \div f & \text{in}\,\ Q_T\coloneqq \Omega\times(0,T), \\
            \partial_\nu \rho = -f\cdot\nu & \text{on}\ \Sigma_T\coloneqq \partial\Omega\times(0,T), \\
            \rho(\cdot,0) = \varphi & \text{in}\,\ \Omega,
        \end{cases}
    \end{equation}
    if
    \begin{equation}\label{eq:weak}
        \langle\partial_t\rho, v\rangle + \int_\Omega (D\nabla\rho - \rho\Gamma)\cdot\nabla v\,dx = -\int_\Omega f\cdot\nabla v\,dx
    \end{equation}
    for any $v\in H^1(\Omega)$ and a.e. $t\in[0,T]$, and $\rho(0) = \varphi$.
\end{definition}

\begin{remark}
    By the embedding property in \cite[Theorem 7.100]{Salsa2022}, it follows automatically from the definition that $\rho\in C([0,T];L^2(\Omega))$. So the initial condition $\rho(0) = \varphi$ makes sense in $L^2(\Omega)$.
\end{remark}

\begin{remark}\label{rmk:Neumann}
    The weak formulation \eqref{eq:weak} with $f=0$ naturally incorporates the no-flux boundary condition for the total cell flux:
    \begin{equation*}
        D\partial_\nu \rho - \rho\Gamma\cdot\nu = 0 \ \text{on}\ \Sigma_T,
    \end{equation*}
    which is the boundary condition originally associated with the conservation law in \eqref{eq:model}. As discussed after the model formulation, we assume that the drift induced by light illumination has no normal component on the boundary, namely \eqref{eq:drift_boundary}, or equivalently $\partial_\nu\Phi = 0$ in the reduced model \eqref{eq:adv-diff}. Under this assumption, the no-flux condition reduces to the Neumann condition, so that the strong and weak formulations are consistent.
\end{remark}

The following results, including the well-posedness and some basic properties of the weak solution, are standard and well-known. For their proofs, one can refer to, e.g., \cite{Evans2010,Perthame2015,Lieberman1996}.

\begin{proposition}\label{prop:well_posed}
    Suppose $\Omega$ is a bounded domain in $\R^2$, and $D,\Gamma\in L^\infty(Q_T)$ satisfy
    \begin{equation}\label{eq:as}
        \Gamma\cdot\nu = 0\ \text{on}\ \Sigma_T,\quad
        D\ge\lambda\ \text{in}\ Q_T,\quad \text{and}\quad
        \|D\|_{L^\infty(Q_T)}, \|\Gamma\|_{L^\infty(Q_T)}\le\Lambda
    \end{equation}
    for some positive constants $\lambda, \Lambda$. Then for any $f\in L^2(Q_T)$ and $\varphi\in L^2(\Omega)$, the problem \eqref{eq:with_source} admits a unique weak solution $\rho(x,t)$, and it satisfies:
    \begin{enumerate}[(i)]
        \item (mass conservation)
        \begin{equation*}
            \int_\Omega \rho(x,t)\,dx \equiv \int_\Omega \varphi(x)\,dx, \quad \forall\,t\in[0,T];
        \end{equation*}

        \item (energy estimate)
        \begin{equation*}
            \|\rho\|_{L^2(0,T;H^1(\Omega))} \le C(\lambda,\Lambda,\Omega,T)\big(\|f\|_{L^2(Q_T)} + \|\varphi\|_{L^2(\Omega)}\big);
        \end{equation*}
        
        \item (nonnegativity principle) if $f = 0$ and $\varphi\ge 0$, then $\rho\ge 0$ in $Q_T$.
    \end{enumerate}
\end{proposition}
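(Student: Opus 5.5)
The plan is to obtain existence and uniqueness by a Galerkin scheme and then derive (i)–(iii) by choosing suitable test functions in \eqref{eq:weak}.

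\textbf{Existence and uniqueness.} Take an orthonormal basis $\{w_k\}$ of $L^2(\Omega)$ made of Neumann eigenfunctions of $-\Delta$; these are orthogonal in $H^1(\Omega)$ as well. Seek $\rho_m(t)=\sum_{k\le m}c_k(t)w_k$ satisfying \eqref{eq:weak} for $v=w_1,\dots,w_m$. This is a linear ODE system with $L^\infty$ coefficients, so it has an absolutely continuous solution on $[0,T]$. Test with $\rho_m$ and use $D\ge\lambda$ together with Young's inequality:
\begin{equation*}
\Big|\int_\Omega \rho\Gamma\cdot\nabla\rho\Big|\le \tfrac{\lambda}{4}\|\nabla\rho\|^2+\tfrac{\Lambda^2}{\lambda}\|\rho\|^2,\qquad
\Big|\int_\Omega f\cdot\nabla\rho\Big|\le\tfrac{\lambda}{4}\|\nabla\rho\|^2+\tfrac1\lambda\|f\|^2 .
\end{equation*}
This gives
\begin{equation*}
\tfrac{d}{dt}\|\rho_m\|^2+\lambda\|\nabla\rho_m\|^2\le C\|\rho_m\|^2+C\|f\|^2 .
\end{equation*}
Gronwall's inequality then yields a uniform bound on $\rho_m$ in $L^\infty(0,T;L^2)\cap L^2(0,T;H^1)$. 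The equation supplies a bound on $\rho_m'$ in $L^2(0,T;(H^1)^*)$; here one uses the projection onto $\mathrm{span}\{w_k\}$, which is $H^1$-stable for this basis. Weak limits pass to the limit in the linear weak form, and the initial condition is recovered in the standard way. The same energy inequality with $f=0$, $\varphi=0$ gives uniqueness. Estimate (ii) is exactly this energy bound, with $C$ depending only on $\lambda,\Lambda,T$ and $\Omega$.

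\textbf{Mass conservation.} Take $v\equiv1\in H^1(\Omega)$ in \eqref{eq:weak}. All gradient terms vanish, so $\frac{d}{dt}\int_\Omega\rho=\langle\rho',1\rangle=0$ in the distributional sense. Combined with $\rho\in C([0,T];L^2)$ this gives (i).

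\textbf{Nonnegativity.} Test with $v=\rho^-=\max(-\rho,0)\in L^2(0,T;H^1)$ and use the chain rule $\langle\rho',-\rho^-\rangle=\frac12\frac{d}{dt}\|\rho^-\|^2$, valid in this Lions–Magenes setting. On the set where $\rho<0$ we have $\nabla\rho^-=-\nabla\rho$ and $\rho=-\rho^-$. Therefore
\begin{equation*}
\tfrac12\tfrac{d}{dt}\|\rho^-\|^2+\int_\Omega D|\nabla\rho^-|^2=\int_\Omega\rho^-\Gamma\cdot\nabla\rho^-\le\tfrac{\lambda}{2}\|\nabla\rho^-\|^2+\tfrac{\Lambda^2}{2\lambda}\|\rho^-\|^2 .
\end{equation*}
Since $\varphi\ge0$ gives $\rho^-(0)=0$, Gronwall's inequality forces $\rho^-\equiv0$.

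\textbf{Main obstacle.} The only delicate points are justifying the chain rule for $\|\rho^-(t)\|^2$ and the fact that $\rho^-$ is an admissible test function. I would handle both by the standard approximation argument, regularizing in time or using $\rho_m$ and the convexity of $s\mapsto (s^-)^2$, as in the cited references \cite{Evans2010,Lieberman1996}. Everything else is routine.
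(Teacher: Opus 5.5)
Your proposal is correct and follows the route the paper intends. The paper gives no proof of its own and simply defers to the standard references (Evans, Perthame, Lieberman), which use exactly your ingredients: Galerkin approximation with a Gronwall energy bound for existence, uniqueness and (ii), the test function $v\equiv 1$ for (i), and the Stampacchia truncation $v=\rho^-$ for (iii). The one loose end is in your last paragraph: justify $\langle \rho',-\rho^-\rangle=\frac12\frac{d}{dt}\|\rho^-\|^2$ by time regularization, i.e.\ density of smooth functions in the Lions--Magenes space, rather than through the Galerkin approximants, since $\rho_m^-$ is not an admissible test function for the finite-dimensional system and $\rho_m$ converges only weakly.
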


It is remarkable that the nonnegativity principle is consistent with the fact that the population density should be nonnegative. Moreover, by the strong maximum principle of the Neumann problem of parabolic equation \cite[Theorem 6.43]{Lieberman1996}, we can see that the solution cannot vanish anywhere in $\overline{\Omega}\times(0,T]$, thereby having a positive lower bound in the compact set $\overline{\Omega}\times[\tau, T]$ for any $\tau\in(0,T)$. This will be helpful in the stability analysis of the inverse problem. To illustrate it rigorously, we need a higher regularity result for the weak solution:
\begin{proposition}\label{prop:high_reg}
    Suppose $\Omega$ is a bounded domain in $\R^2$ with $C^2$ boundary, and $D,\Gamma\in C^1([0,T];W^{1,\infty}(\Omega))$ satisfying \eqref{eq:as} and
    \begin{equation}\label{eq:as2}
        \|D\|_{C^1([0,T];W^{1,\infty}(\Omega))}, \|\Gamma\|_{C^1([0,T];W^{1,\infty}(\Omega))} \le A
    \end{equation}
    for some constant $A>0$. Then for any $\varphi\in H^2(\Omega)$, the weak solution to \eqref{eq:with_source} with $f=0$ satisfies $\rho\in C([0,T];H^2(\Omega))$, and
    \begin{equation*}
        \|\rho\|_{C([0,T];H^2(\Omega))} \le C(\lambda,A,\Omega,T)\|\varphi\|_{H^2(\Omega)}.
    \end{equation*}
\end{proposition}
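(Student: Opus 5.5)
The plan is a Galerkin scheme with higher-order energy estimates: differentiate the equation in time, then recover $H^2$ from elliptic regularity of the Neumann problem at each fixed time. Write the equation with $f=0$ in nondivergence form,
\begin{equation*}
\partial_t\rho - D\Delta\rho - \nabla D\cdot\nabla\rho + \Gamma\cdot\nabla\rho + (\div\Gamma)\rho = 0 ,
\end{equation*}
where all lower-order coefficients are bounded by $A$ thanks to \eqref{eq:as2}. I would work with Galerkin approximations $\rho_m$ built on the Neumann eigenfunctions of $-\Delta$ in $\Omega$; these lie in $H^2(\Omega)$ because $\partial\Omega$ is $C^2$. All estimates are derived for $\rho_m$ uniformly in $m$, and the limit is identified with the unique weak solution from \cref{prop:well_posed}.

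\textbf{Step 1 (first-order energy estimate).} Test the equation with $\rho$ and with $\partial_t\rho$. The second test gives
\begin{equation*}
\int_\Omega|\partial_t\rho|^2 + \tfrac12\tfrac{d}{dt}\int_\Omega D|\nabla\rho|^2 = \tfrac12\int_\Omega \partial_t D|\nabla\rho|^2 + \int_\Omega \rho\,\Gamma\cdot\nabla\partial_t\rho .
\end{equation*}
The last term must be rewritten before it can be estimated. Since $\Gamma\cdot\nu=0$, integration by parts gives $-\int \partial_t\rho\,\div(\rho\Gamma)$, which is bounded by $\varepsilon\|\partial_t\rho\|^2 + C\|\rho\|_{H^1}^2$. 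Gronwall's inequality then yields $\rho\in L^\infty(0,T;H^1)$ and $\partial_t\rho\in L^2(Q_T)$, controlled by $\|\varphi\|_{H^1}$.

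\textbf{Step 2 (time-differentiated estimate).} Set $w=\partial_t\rho$. Formally, $w$ solves
\begin{equation*}
\partial_t w - \div(D\nabla w) + \div(w\Gamma) = \div\bigl(\partial_t D\nabla\rho - \rho\,\partial_t\Gamma\bigr),
\end{equation*}
with the compatible boundary condition $\partial_\nu w = -(\partial_tD\nabla\rho-\rho\partial_t\Gamma)\cdot\nu/D$. This is exactly the flux structure of \eqref{eq:with_source}, because $\partial_t\Gamma\cdot\nu=0$ and $\partial_\nu\rho=0$. The source lies in $L^2(Q_T)$ by Step 1 and the $C^1$-in-time bounds. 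The initial value is $w(0) = \div(D(0)\nabla\varphi) - \div(\varphi\Gamma(0))$, which is in $L^2$ precisely because $\varphi\in H^2$ and $D(0),\Gamma(0)\in W^{1,\infty}$. Applying the energy estimate (ii) of \cref{prop:well_posed} together with the standard $L^\infty(0,T;L^2)$ bound for this problem gives
\begin{equation*}
\|\partial_t\rho\|_{L^\infty(0,T;L^2)}\le C\|\varphi\|_{H^2}.
\end{equation*}
At the Galerkin level this step is carried out by differentiating the ODE system in $t$ and testing with $\partial_t\rho_m$. The initial datum $\partial_t\rho_m(0)$ is controlled by projecting $\varphi$, using that the eigenfunction projection is $H^2$-stable for Neumann data.

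\textbf{Step 3 (elliptic regularity and continuity).} For each fixed $t$, $\rho(t)$ solves the Neumann problem
\begin{equation*}
-\div(D(t)\nabla\rho) = -\partial_t\rho - \div(\rho\Gamma(t)) \in L^2, \qquad \partial_\nu\rho=0 .
\end{equation*}
Here $D(t)\in W^{1,\infty}$, $D\ge\lambda$, and $\partial\Omega\in C^2$, so $H^2$ regularity for the Neumann problem gives
\begin{equation*}
\|\rho(t)\|_{H^2}\le C\bigl(\|\partial_t\rho(t)\|_{L^2}+\|\rho(t)\|_{H^1}\bigr)\le C\|\varphi\|_{H^2}
\end{equation*}
uniformly in $t$.

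Steps 1–3 give $\rho\in L^\infty(0,T;H^2)$. To upgrade this to $C([0,T];H^2)$, I would show $\partial_t\rho\in C([0,T];L^2)$; this follows from Step 2, since there $w\in L^2(H^1)$ and $\partial_t w\in L^2((H^1)^*)$. Then apply the elliptic estimate to differences $\rho(t)-\rho(s)$. Their right-hand sides converge to $0$ in $L^2$ because $D,\Gamma\in C^1([0,T];W^{1,\infty})$, and the term $\div((D(t)-D(s))\nabla\rho(s))$ is small in $L^2$ given the uniform $H^2$ bound on $\rho(s)$.

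The main obstacle is Step 2. Making the differentiated problem rigorous requires the boundary term and the initial datum $\partial_t\rho(0)$ to be handled compatibly, and this is exactly where $\varphi\in H^2$ and $\Gamma\cdot\nu=0$ enter. I would do it at the Galerkin level rather than invoke \cref{prop:well_posed} directly on $w$.
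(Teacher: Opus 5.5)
\textbf{Relation to the paper.} Your route is the same as the paper's. The paper gets $\rho\in L^\infty(0,T;H^2(\Omega))$ and $\partial_t\rho\in C([0,T];L^2(\Omega))$ by adapting Evans's higher-regularity theorem, which is exactly your Galerkin scheme: first-order energy estimate, time-differentiated estimate, fixed-time elliptic regularity. It then proves continuity in $H^2$ from the elliptic Neumann problem for $\rho(t)-\rho(s)$, as you do. One small point in that last step: your elliptic estimate carries the lower-order term $\|\rho(t)-\rho(s)\|_{H^1(\Omega)}$. It tends to zero by interpolating between $\rho\in C([0,T];L^2(\Omega))$ and the uniform $H^2$ bound. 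The paper removes it instead via $\int_\Omega(\rho(t)-\rho(s))\,dx=0$.

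\textbf{The gap: the initial datum in Step 2.} The value of $\partial_t\rho$ at $t=0$ dictated by the weak formulation is the functional on $H^1(\Omega)$
\begin{equation*}
v\mapsto -\int_\Omega \bigl(D(0)\nabla\varphi-\varphi\Gamma(0)\bigr)\cdot\nabla v\,dx
=\int_\Omega \bigl[\div(D(0)\nabla\varphi)-\div(\varphi\Gamma(0))\bigr]v\,dx-\int_{\partial\Omega}D(0)\,\partial_\nu\varphi\,v\,ds .
\end{equation*}
So it lies in $L^2(\Omega)$ only when $\partial_\nu\varphi=0$ on $\partial\Omega$, not ``precisely because $\varphi\in H^2$''. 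Likewise, the Neumann eigenfunction projections $P_m\varphi$ are bounded in $H^2$ uniformly in $m$ only for such $\varphi$.

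\textbf{Why the stated hypotheses cannot be kept.} The conclusion itself forces compatibility. For $t>0$ the solution satisfies $\partial_\nu\rho(t)=0$. Continuity (or even boundedness) of $t\mapsto\rho(t)$ in $H^2(\Omega)$, together with (weak) continuity of the normal trace $H^2(\Omega)\to H^{1/2}(\partial\Omega)$, then gives $\partial_\nu\varphi=0$. For instance, the Neumann heat equation on the unit disk with $\varphi=x_1$ has $\|\rho(t)\|_{H^2(\Omega)}\to\infty$ as $t\to0^+$.

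The paper's statement has the same omission, and so does its appeal to Evans, whose theorem requires the initial datum in $H^1_0(\Omega)\cap H^2(\Omega)$ (the Dirichlet counterpart of this condition). If you add the hypothesis $\partial_\nu\varphi=0$ on $\partial\Omega$, your argument is complete as written.
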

\begin{proof}
    Analogously to the proof of Theorem 5 in \cite[Section 7.1]{Evans2010} for Dirichlet problem, but with additional integrations by parts since our coefficients $D,\Gamma$ are time-dependent, we derive that $\rho\in L^\infty(0,T;H^2(\Omega))$ and $\partial_t\rho\in C([0,T];L^2(\Omega))$, with
    \begin{equation}\label{eq:uniform}
        \|\rho\|_{L^\infty(0,T;H^2(\Omega))} \le C(\lambda,A,\Omega,T)\|\varphi\|_{H^2(\Omega)}.
    \end{equation}
    So it remains to prove that $\rho\colon [0,T]\to H^2(\Omega)$ is continuous. For this purpose, consider the equation of the difference $u(t;s) = \rho(t) - \rho(s)$ for any $t,s\in[0,T]$:
    \begin{align*}
        -\div(D(t)\nabla u(t;s)) + \div(u(t;s)\Gamma(t)) = \rho'(s) - \rho'(t) &+ \div((D(t)-D(s))\nabla\rho(s)) \\
        &- \div(\rho(s)(\Gamma(t)-\Gamma(s))) \ \text{in}\ \Omega,
    \end{align*}
    with Neumann boundary condition $\partial_\nu u(t;s) = 0$ on $\partial\Omega$. By the regularity result of elliptic equation, since $\int_\Omega u(t;s)\,dx = 0$ according to the mass conservation, we know that
    \begin{align*}
        \|u(t;s)\|_{H^2(\Omega)} \le{}& C(\lambda,A,\Omega) \big\|\rho'(s)-\rho'(t) + \div((D(t)-D(s))\nabla\rho(s)) - \div(\rho(s)(\Gamma(t)-\Gamma(s)))\big\|_{L^2(\Omega)} \\
        \le{}& C(\lambda,A,\Omega) \big(\|\rho'(s)-\rho'(t)\|_{L^2(\Omega)} + \|D(t)-D(s)\|_{W^{1,\infty}(\Omega)}\|\rho(s)\|_{H^2(\Omega)} \\
        &\phantom{C(\lambda,A,\Omega)}\ + \|\Gamma(t)-\Gamma(s)\|_{W^{1,\infty}(\Omega)}\|\rho(s)\|_{H^1(\Omega)}\big) \\
        \le{}& C(\lambda,A,\Omega,T) \big(\|\rho'(s)-\rho'(t)\|_{L^2(\Omega)} + \|D(t)-D(s)\|_{W^{1,\infty}(\Omega)}\|\varphi\|_{H^2(\Omega)} \\
        &\phantom{C(\lambda,A,\Omega,T)}\ + \|\Gamma(t)-\Gamma(s)\|_{W^{1,\infty}(\Omega)}\|\varphi\|_{H^2(\Omega)}\big),
    \end{align*}
    where we used the estimate \eqref{eq:uniform} in the last inequality. Since $\partial_t\rho\in C([0,T];L^2(\Omega))$ and $D,\Gamma\in C([0,T];W^{1,\infty}(\Omega))$, we see that $u(t;s)\to 0$ in $H^2(\Omega)$ as $s\to t$ for any $t\in[0,T]$, meaning that $\rho\in C([0,T];H^2(\Omega))$.
\end{proof}

This proposition shows that $\rho\in C(\overline{Q_T})$ under the given conditions by Sobolev embedding. If the initial population $\varphi$ is nonnegative and nontrivial, the strong maximum principle \cite[Theorem 6.43]{Lieberman1996} implies that the solution is strictly positive for every positive time. Combined with the continuity obtained above, this shows that every fixed solution admits a positive lower bound on any compact time interval bounded away from the initial time.

\section{The inverse problem}\label{sec:ip}

By translation, one can shift $t_0$ to the initial time and still denote the final observation time by $T$.
The diffusion coefficient $D$ and the advection coefficient $K$ are therefore expressed as
\begin{equation}\label{eq:para}
    \begin{split}
        D(x,t) &= D_s(x) + e^{-\gamma t}[D_0(x) - D_s(x)], \\
        K(x,t) &= K_s(x) + e^{-\gamma t}[K_0(x) - K_s(x)].
    \end{split}
\end{equation}
Since the translated initial time corresponds to a positive time $t_0$ of the original evolution, the measured density is strictly positive on the translated observation cylinder. Hence, for each fixed observed data $\rho$,
\begin{equation*}
    c_\rho \coloneqq \min_{\overline{Q_T}} \rho > 0.
\end{equation*}
This value, however, is solution-dependent and is therefore retained explicitly in the assumptions of the stability estimates below.

Throughout this section, we assume that $\Omega$ is a bounded domain in $\R^2$ with $C^2$ boundary and $\nabla\Phi\in W^{1,\infty}(\Omega)$, and retain the standing boundary condition $\partial_\nu\Phi=0$ on $\partial\Omega$. 
The inverse problem seeks to recover the five parameters, $D_0(x),D_s(x),K_0(x),K_s(x)$ and $\gamma$, from the recorded data $\rho|_{Q_T}$.
To formulate the forward operator $F$ of this inverse problem, we introduce an auxiliary operator
\begin{equation*}
    \tilde{F}\colon {\cal D}(\tilde{F})\subset L^\infty(Q_T)^2 \to L^2(Q_T), \quad (D,K)\mapsto \rho|_{Q_T}
\end{equation*}
with domain
\begin{equation*}
    {\cal D}(\tilde{F}) = \big\{(D,K)\in L^\infty(Q_T)^2: D\ge\lambda\ \text{in}\ Q_T\ \text{and}\
    \|D\|_{L^\infty(Q_T)}, \|K\|_{L^\infty(Q_T)}\le\Lambda\big\}.
\end{equation*}
Write the parameter expression of \eqref{eq:para} as an operator
\begin{equation*}
    P\colon {\cal D}(P)\subset L^\infty(\Omega)^4\times\R \to L^\infty(Q_T)^2, \quad (D_0,D_s,K_0,K_s,\gamma)\mapsto (D,K)
\end{equation*}
with domain ${\cal D}(P) = {\cal T}_+^2 \times {\cal T}^2 \times \R_+$, where
\begin{equation*}
    {\cal T} = \{w\in L^\infty(\Omega): \|w\|_{L^\infty(\Omega)}\le\Lambda\}, \quad
    {\cal T}_+ = \{w\in {\cal T}: w\ge\lambda\ \text{in}\ \Omega\}
\end{equation*}
for some positive constants $\lambda,\Lambda$. Then the forward operator $F$ is defined by $F = \tilde{F}\circ P$ with domain ${\cal D}(F) = {\cal D}(P)$.

\subsection{Conditional stability and cross-sensitivity of diffusion and advection coefficients}\label{sec:unique}

Although the inverse problem ultimately concerns the structured coefficients in \eqref{eq:para}, the estimates below are established for the general coefficient-to-solution map $\tilde F$. They therefore describe intrinsic analytical properties of the diffusion-advection equation rather than a particular reconstruction procedure. We first isolate the information carried by the internal density about each coefficient when the other coefficient is fixed.

\begin{theorem}[Conditional stability for the advection coefficient]
    Let $D\in C^1([0,T];W^{1,\infty}(\Omega))$ satisfy
    \begin{equation*}
        D\ge\lambda\ \text{in}\ Q_T,
        \qquad
        \|D\|_{C^1([0,T];W^{1,\infty}(\Omega))}\le A,
    \end{equation*}
    for some positive constants $\lambda$ and $A$. For $i=1,2$, let $K^i\in C^1([0,T];W^{1,\infty}(\Omega))$ satisfy
    \begin{equation*}
        \|K^i\|_{C^1([0,T];W^{1,\infty}(\Omega))}\le A,
    \end{equation*}
    and let $\rho^i=\tilde F(D,K^i)$. If $\rho^1\ge c_\rho>0$ in $Q_T$, then for every $t\in[0,T]$,
    \begin{equation}\label{eq:weighted_K}
        \int_\Omega |K^1(t)-K^2(t)||\nabla\Phi|^2\,dx
        \le \frac{C}{c_\rho}\left(
        \|\partial_t\rho^1(t)-\partial_t\rho^2(t)\|_{L^2(\Omega)}
        +\|\rho^1(t)-\rho^2(t)\|_{H^2(\Omega)}
        \right),
    \end{equation}
    where $C=C(A,\|\nabla\Phi\|_{W^{1,\infty}(\Omega)},\Omega)$. 
    In particular, if $|\nabla\Phi|\ge c_\Phi>0$ in a subdomain $\Omega'\subset\Omega$, then
    \begin{equation}\label{eq:local_stab_K}
        \|K^1(t)-K^2(t)\|_{L^1(\Omega')}
        \le \frac{C}{c_\rho c_\Phi^2}\left(
        \|\partial_t\rho^1(t)-\partial_t\rho^2(t)\|_{L^2(\Omega)}
        +\|\rho^1(t)-\rho^2(t)\|_{H^2(\Omega)}
        \right).
    \end{equation}
\end{theorem}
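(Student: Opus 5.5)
The plan is to turn the difference of the two forward problems into a first-order transport identity for $w\coloneqq\rho^1\,\delta K$, where $\delta K\coloneqq K^1-K^2$. I will then test that identity against a function built from $\operatorname{sgn}(w)$ and $\Phi$, so that the absolute value lands \emph{inside} the integral. The key observation is that $\operatorname{sgn}(w)\operatorname{div}(w\nabla\Phi)=\operatorname{div}(|w|\nabla\Phi)$, and that multiplying by $\Phi$ and integrating by parts converts this divergence into $-\int_\Omega |w||\nabla\Phi|^2\,dx$.

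\emph{Step 1 (compensation identity).} Set $\delta\rho\coloneqq\rho^1-\rho^2$. Both $\rho^i$ lie in $C([0,T];H^2(\Omega))$ with $\partial_t\rho^i\in C([0,T];L^2(\Omega))$ by \cref{prop:high_reg}, with $\Gamma=K^i\nabla\Phi$; the bound \eqref{eq:as2} holds because $\nabla\Phi\in W^{1,\infty}(\Omega)$. Subtracting the two equations gives, for every $t$ and a.e.\ in $\Omega$,
\begin{equation*}
\operatorname{div}\big(\rho^1\delta K\nabla\Phi\big)=g(t)\coloneqq-\partial_t\delta\rho+\operatorname{div}(D\nabla\delta\rho)-\operatorname{div}(\delta\rho\,K^2\nabla\Phi).
\end{equation*}
Expanding the divergences and using $\|D\|_{W^{1,\infty}},\|K^2\|_{W^{1,\infty}}\le A$ yields
\begin{equation*}
\|g(t)\|_{L^1(\Omega)}\le C(\Omega)\|g(t)\|_{L^2(\Omega)}\le C\big(\|\partial_t\delta\rho(t)\|_{L^2(\Omega)}+\|\delta\rho(t)\|_{H^2(\Omega)}\big),
\end{equation*}
with $C=C(A,\|\nabla\Phi\|_{W^{1,\infty}},\Omega)$.

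\emph{Step 2 (absolute-value trick).} Since $\rho^1(t)\in H^2(\Omega)\subset W^{1,p}(\Omega)$ for all $p<\infty$ (here $\Omega\subset\R^2$) and $\delta K(t)$ is Lipschitz, we have $w(t)\in W^{1,p}(\Omega)$ and $w\nabla\Phi\in W^{1,p}$. Let $\eta_\varepsilon(s)=s/\sqrt{s^2+\varepsilon^2}$ and $H_\varepsilon(s)=\sqrt{s^2+\varepsilon^2}-\varepsilon$, so that $H_\varepsilon'=\eta_\varepsilon$. The product and chain rules give
\begin{equation*}
\eta_\varepsilon(w)\operatorname{div}(w\nabla\Phi)=\operatorname{div}\big(H_\varepsilon(w)\nabla\Phi\big)+\big(w\eta_\varepsilon(w)-H_\varepsilon(w)\big)\Delta\Phi .
\end{equation*}
The last factor satisfies $|w\eta_\varepsilon(w)-H_\varepsilon(w)|\le 2\varepsilon$. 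Normalize $\Phi$ to have zero mean; this is harmless because only $\nabla\Phi$ enters the model. Then $\Phi$ is Lipschitz and $\|\Phi\|_{L^\infty}\le C(\Omega)\|\nabla\Phi\|_{L^\infty}$, assuming $\Omega$ is connected. Multiply the identity by $\Phi$ and integrate by parts, using $\partial_\nu\Phi=0$ on $\partial\Omega$, so no boundary term appears:
\begin{equation*}
\int_\Omega H_\varepsilon(w)|\nabla\Phi|^2\,dx=-\int_\Omega\Phi\,\eta_\varepsilon(w)\,g\,dx+O(\varepsilon).
\end{equation*}
Letting $\varepsilon\to0$, monotone/dominated convergence gives
\begin{equation*}
\int_\Omega|w||\nabla\Phi|^2\,dx\le\|\Phi\|_{L^\infty}\|g\|_{L^1}.
\end{equation*}

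\emph{Step 3 (conclusion).} Since $|w|=\rho^1|\delta K|\ge c_\rho|\delta K|$, Steps 1--2 give \eqref{eq:weighted_K}. Restricting the integral to $\Omega'$, where $|\nabla\Phi|^2\ge c_\Phi^2$, then gives \eqref{eq:local_stab_K}.

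The main obstacle I expect is Step 2. It requires justifying the renormalized identity for $w$ of only Sobolev regularity, and handling the commutator term through the smooth approximation $\eta_\varepsilon$. It also requires ensuring that the boundary term vanishes, which relies on the standing assumption $\partial_\nu\Phi=0$. Bounding $g$ in Step 1 is routine.
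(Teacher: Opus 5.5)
Your proposal is correct and follows essentially the same route as the paper. Both arguments use the same compensation identity $\div(\rho^1(K^1-K^2)\nabla\Phi)=g$ and the same sign trick to move the absolute value inside the divergence. Both then test against the mean-zero normalization of $\Phi$ and integrate by parts, with the boundary term removed by $\partial_\nu\Phi=0$.

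The differences are cosmetic. You justify the sign step by smoothing with $\eta_\varepsilon$ applied to $w=\rho^1(K^1-K^2)$, whereas the paper applies the Sobolev chain rule directly to $|K^1-K^2|$; both are valid, since $\sgn(w)=\sgn(K^1-K^2)$ when $\rho^1>0$. You close with an $L^\infty$--$L^1$ pairing, whereas the paper uses an $L^2$--$L^2$ pairing plus Poincar\'e.
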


\begin{proof}
    Set $r\coloneqq\rho^1-\rho^2$ and $q_K\coloneqq K^1-K^2$. Subtracting the two equations of $\rho^1$ and $\rho^2$ gives
    \begin{equation}
        -\div(\rho^1 q_K\nabla\Phi)
        =\partial_t r-\div(D\nabla r)+\div(rK^2\nabla\Phi)
        \eqqcolon f_K
        \quad\text{in }Q_T.
    \end{equation}
    Denote the sign function by
    \begin{equation*}
        \sgn(s) = \begin{cases}
            1, & s>0, \\ 0, & s=0, \\ -1, & s<0.
        \end{cases}
    \end{equation*}
    By the Sobolev chain rule,
    \begin{align}
        -\div(\rho^1|q_K|\nabla\Phi) &= -\nabla|q_K|\cdot\rho^1\nabla\Phi - |q_K|\div(\rho^1\nabla\Phi) \nonumber \\
        &= -\sgn(q_K)[\nabla q_K\cdot\rho^1\nabla\Phi + q_K\div(\rho^1\nabla\Phi)] \nonumber \\
        &= -\sgn(q_K)\div(\rho^1 q_K\nabla\Phi)
        = \sgn(q_K)f_K \quad \text{in } Q_T. \label{eq:diff_K}
    \end{align}
    Set $\psi\coloneqq\Phi-|\Omega|^{-1}\int_\Omega\Phi\,dx$. Multiplying the equation \eqref{eq:diff_K} by $\psi$, integrating by parts over $\Omega$, and using the Poincar\'e inequality to control $\psi$ in terms of $\nabla\Phi$, we obtain
    \begin{align*}
        \int_\Omega \rho^1(t)|q_K||\nabla\Phi|^2\,dx
        &\le \|\psi\|_{L^2(\Omega)} \|f_K(t)\|_{L^2(\Omega)} \\
        &\le C(A,\|\nabla\Phi\|_{W^{1,\infty}(\Omega)},\Omega)
        \left(
        \|\partial_t r(t)\|_{L^2(\Omega)}
        +\|r(t)\|_{H^2(\Omega)}
        \right).
    \end{align*}
    The lower bound on $\rho^1$ gives \eqref{eq:weighted_K}, and \eqref{eq:local_stab_K} follows from the lower bound on $|\nabla\Phi|$ in $\Omega'$.
\end{proof}

\begin{theorem}[Conditional stability for the diffusion coefficient]
    Let $K\in C^1([0,T];W^{1,\infty}(\Omega))$ satisfy
    \begin{equation*}
        \|K\|_{C^1([0,T];W^{1,\infty}(\Omega))}\le A.
    \end{equation*}
    For $i=1,2$, let $D^i\in C^1([0,T];W^{1,\infty}(\Omega))$ satisfy
    \begin{equation*}
        D^i\ge\lambda\ \text{in}\ Q_T,
        \qquad
        \|D^i\|_{C^1([0,T];W^{1,\infty}(\Omega))}\le A,
    \end{equation*}
    and let $\rho^i=\tilde F(D^i,K)$. Then, for every $t\in[0,T]$,
    \begin{equation}\label{eq:weighted_D}
        \int_\Omega |D^1(t)-D^2(t)||\nabla\rho^1(t)|^2\,dx \le C\|\rho^1(t)\|_{L^2(\Omega)}\left(
        \|\partial_t\rho^1(t)-\partial_t\rho^2(t)\|_{L^2(\Omega)}
        +\|\rho^1(t)-\rho^2(t)\|_{H^2(\Omega)}
        \right), 
    \end{equation}
    where $C=C(A,\|\nabla\Phi\|_{W^{1,\infty}(\Omega)},\Omega)$.
    Consequently, if $|\nabla\rho^1(t)|\ge c_g>0$ in a subdomain $\Omega'\subset\Omega$, then
    \begin{equation}\label{eq:local_stab_D}
        \|D^1(t)-D^2(t)\|_{L^1(\Omega')}
        \le \frac{C}{c_g^2}\|\rho^1(t)\|_{L^2(\Omega)}\left(
        \|\partial_t\rho^1(t)-\partial_t\rho^2(t)\|_{L^2(\Omega)} +\|\rho^1(t)-\rho^2(t)\|_{H^2(\Omega)}
        \right). 
    \end{equation}
\end{theorem}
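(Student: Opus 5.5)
We mirror the proof of the preceding theorem for the advection coefficient, with $\rho^1$ now playing the role of the test function. Set $r\coloneqq\rho^1-\rho^2$ and $q_D\coloneqq D^1-D^2$. Subtracting the equations for $\rho^1$ and $\rho^2$ (both with the same $K$) gives
\begin{equation*}
    -\div(q_D\nabla\rho^1) = -\partial_t r + \div(D^2\nabla r) - \div(rK\nabla\Phi) \eqqcolon f_D \quad\text{in } Q_T.
\end{equation*}
By Proposition~\ref{prop:high_reg}, $\rho^i\in C([0,T];H^2(\Omega))$ and $\partial_t\rho^i\in C([0,T];L^2(\Omega))$, so $f_D(t)\in L^2(\Omega)$ for every $t$. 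Expanding the divergences and using the bounds on $D^2$, $K$ and $\nabla\Phi$ yields
\begin{equation*}
    \|f_D(t)\|_{L^2(\Omega)}\le C(A,\|\nabla\Phi\|_{W^{1,\infty}(\Omega)},\Omega)\bigl(\|\partial_t r(t)\|_{L^2(\Omega)}+\|r(t)\|_{H^2(\Omega)}\bigr).
\end{equation*}

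Next, as in \eqref{eq:diff_K}, we pass to the absolute value. The Sobolev chain rule gives $\nabla|q_D|=\sgn(q_D)\nabla q_D$ almost everywhere, and $q_D(t)\in W^{1,\infty}(\Omega)$, so
\begin{equation*}
    -\div(|q_D|\nabla\rho^1) = -\sgn(q_D)\div(q_D\nabla\rho^1) = \sgn(q_D) f_D .
\end{equation*}
Multiply this identity by $\rho^1(t)$ and integrate over $\Omega$. The boundary term vanishes because $\partial_\nu\rho^1=0$. This gives
\begin{equation*}
    \int_\Omega |q_D||\nabla\rho^1|^2\,dx = \int_\Omega \sgn(q_D) f_D\,\rho^1\,dx \le \|\rho^1(t)\|_{L^2(\Omega)}\|f_D(t)\|_{L^2(\Omega)}.
\end{equation*}
Combined with the bound on $f_D$, this proves \eqref{eq:weighted_D}. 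No Poincar\'e step or mean subtraction is needed: the test function $\rho^1$ already has zero normal derivative, which is exactly why the factor $\|\rho^1(t)\|_{L^2(\Omega)}$ appears in the estimate. Estimate \eqref{eq:local_stab_D} then follows by restricting the integral to $\Omega'$ and using $|\nabla\rho^1|^2\ge c_g^2$ there.

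The main obstacle is justifying the integration by parts with the nonsmooth weight $|q_D|$. The identity $-\div(|q_D|\nabla\rho^1)=\sgn(q_D)f_D$ must hold in $L^2$, with a well-defined boundary trace. Since $|q_D|\in W^{1,\infty}(\Omega)$ and $\nabla\rho^1\in H^1(\Omega)$, the product $|q_D|\nabla\rho^1$ lies in $H^1(\Omega)$. Its normal trace $|q_D|\partial_\nu\rho^1$ vanishes, so Green's formula applies. The pointwise identity uses $\sgn(q_D)\,q_D=|q_D|$ and $\sgn(q_D)\nabla q_D=\nabla|q_D|$ almost everywhere. On the set $\{q_D=0\}$, the gradient $\nabla q_D$ vanishes almost everywhere, so both sides agree there. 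This is the same step the paper already uses in the proof for $K$.
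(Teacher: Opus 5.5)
Your proposal is correct and follows the paper's proof essentially step for step: you subtract the two equations to obtain the divergence identity for $q_D\nabla\rho^1$, pass to $|q_D|$ via the chain rule, test with $\rho^1(t)$ (the boundary term vanishes since $\partial_\nu\rho^1=0$), and close with Cauchy--Schwarz and the coefficient bounds on $f_D$; your justification of Green's formula via $|q_D|\nabla\rho^1\in H^1(\Omega)$ is a detail the paper omits. One minor correction to your aside: the mean subtraction in the $K$ proof serves only to bound $\|\psi\|_{L^2(\Omega)}$ by $\|\nabla\Phi\|$ through Poincar\'e, not to handle a boundary term, so the contrast you draw is slightly misattributed, but this does not affect your argument.
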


\begin{proof}
    Set $r\coloneqq\rho^1-\rho^2$ and $q_D\coloneqq D^1-D^2$. Subtracting the two equations yields
    \begin{equation*}
        \div(q_D\nabla\rho^1)
        =\partial_t r-\div(D^2\nabla r)+\div(rK\nabla\Phi)
        \eqqcolon f_D
        \quad\text{in }Q_T.
    \end{equation*}
    By the Sobolev chain rule,
    \begin{equation*}
        \div(|q_D|\nabla\rho^1)=\sgn(q_D)f_D
        \quad\text{a.e. in }Q_T.
    \end{equation*}
    Multiplying this identity by $\rho^1(t)$ and integrating by parts over $\Omega$ gives
    \begin{equation*}
        \int_\Omega |q_D||\nabla\rho^1|^2\,dx
        \le \|\rho^1(t)\|_{L^2(\Omega)}\|f_D(t)\|_{L^2(\Omega)}.
    \end{equation*}
    The coefficient bounds give \eqref{eq:weighted_D}, and \eqref{eq:local_stab_D} follows immediately.
\end{proof}

These two theorems reveal an asymmetry intrinsic to the model. For the recovery of $K$, the only solution-dependent nondegeneracy factor is $\rho^1$. Its strict positivity is guaranteed after the time translation; the known illumination weight $|\nabla\Phi|^2$ only restricts identification to regions where the illumination gradient does not vanish. By contrast, the estimate for $D$ is weighted by $|\nabla\rho^1|^2$, and the model provides no general positive lower bound for this quantity. Recovering $D$ therefore requires an additional gradient nondegeneracy condition on the measured density.

We next allow both coefficients to vary while requiring the two coefficient pairs to generate the same internal density. The resulting estimates describe how changes in one coefficient must be balanced by changes in the other.

\begin{proposition}[Cross-sensitivity between diffusion and advection coefficients]\label{prop:cross_sensitivity}
    For $i=1,2$, let $D^i,K^i\in C^1([0,T];W^{1,\infty}(\Omega))$ satisfy
    \begin{equation*}
        D^i\ge\lambda\ \text{in}\ Q_T,
        \qquad
        \|D^i\|_{C^1([0,T];W^{1,\infty}(\Omega))},
        \|K^i\|_{C^1([0,T];W^{1,\infty}(\Omega))}\le A,
    \end{equation*}
    for some positive constants $\lambda$ and $A$. If
    \begin{equation*}
        \tilde F(D^1,K^1)=\tilde F(D^2,K^2)=\rho,
    \end{equation*}
    then the following estimates hold for every $t\in[0,T]$.
    \begin{enumerate}[(i)]
        \item If $\rho\ge c_\rho>0$ in $Q_T$, then
        \begin{equation}\label{eq:cross_K}
            \int_\Omega |K^1(t)-K^2(t)||\nabla\Phi|^2\,dx
            \le \frac{2}{c_\rho} \|\rho(t)\|_{H^2(\Omega)} \|D^1(t)-D^2(t)\|_{W^{1,\infty}(\Omega)}.
        \end{equation}

        \item Conversely,
        \begin{equation}\label{eq:cross_D}
            \int_\Omega |D^1(t)-D^2(t)||\nabla\rho(t)|^2\,dx 
            \le 2\|\rho(t)\|_{H^1(\Omega)}^2 \|\nabla\Phi\|_{W^{1,\infty}(\Omega)} \|K^1(t)-K^2(t)\|_{W^{1,\infty}(\Omega)}.
        \end{equation}
    \end{enumerate}
\end{proposition}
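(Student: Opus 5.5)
The plan is to subtract the two equations satisfied by the common density $\rho$ and read off a compensation identity. Since $\tilde F(D^1,K^1)=\tilde F(D^2,K^2)=\rho$, the time derivatives cancel. With $q_D\coloneqq D^1-D^2$ and $q_K\coloneqq K^1-K^2$, we obtain
\begin{equation*}
    \div(q_D\nabla\rho)=\div(\rho\, q_K\nabla\Phi)\quad\text{in }Q_T,
\end{equation*}
for a.e.\ $t$ and pointwise a.e.\ in $\Omega$. This makes sense because, by \cref{prop:high_reg}, $\rho(t)\in H^2(\Omega)$ and the coefficients lie in $W^{1,\infty}$. Both (i) and (ii) then follow by repeating the proofs of the two conditional stability theorems. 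The only change is that the ``data'' right-hand side $f_K$ or $f_D$ is replaced by the other side of this identity, which is linear in the other coefficient perturbation.

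For (i), I would argue exactly as in the advection theorem. First, apply the Sobolev chain rule to get
\begin{equation*}
    -\div(\rho|q_K|\nabla\Phi)=-\sgn(q_K)\div(q_D\nabla\rho).
\end{equation*}
Then multiply by $\psi=\Phi-|\Omega|^{-1}\int_\Omega\Phi$ and integrate by parts. The boundary term vanishes because $\partial_\nu\Phi=0$. This yields
\begin{equation*}
    \int_\Omega\rho|q_K||\nabla\Phi|^2\,dx\le\|\psi\|_{L^2}\|\div(q_D\nabla\rho)\|_{L^2}.
\end{equation*}
Next, expand $\div(q_D\nabla\rho)=\nabla q_D\cdot\nabla\rho+q_D\Delta\rho$, which gives the bound $\|\div(q_D\nabla\rho)\|_{L^2}\le 2\|q_D\|_{W^{1,\infty}}\|\rho\|_{H^2}$. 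Dividing by $c_\rho$ then produces \eqref{eq:cross_K}. One point needs care: the stated constant is exactly $2$, with no factor $\|\psi\|_{L^2}$. I would therefore either absorb the Poincar\'e factor into a normalization of $\Phi$, or note that the constant should really read $C(\Phi,\Omega)$. I would flag this and keep the argument otherwise as in the theorem.

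For (ii), I would mirror the diffusion theorem. The chain rule gives
\begin{equation*}
    \div(|q_D|\nabla\rho)=\sgn(q_D)\div(\rho\,q_K\nabla\Phi).
\end{equation*}
Multiply by $\rho(t)$ and integrate by parts; the boundary term vanishes by the Neumann condition $\partial_\nu\rho=0$. This bounds $\int_\Omega|q_D||\nabla\rho|^2\,dx$ by $\int_\Omega\rho\,|\div(\rho q_K\nabla\Phi)|\,dx$. Expanding the divergence gives the terms
\begin{equation*}
    \nabla\rho\cdot q_K\nabla\Phi,\qquad \rho\,\nabla q_K\cdot\nabla\Phi,\qquad \rho\,q_K\Delta\Phi.
\end{equation*}
Each is bounded pointwise by $\|q_K\|_{W^{1,\infty}}\|\nabla\Phi\|_{W^{1,\infty}}(|\rho|+|\nabla\rho|)$. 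Integrating against $\rho$ and applying Cauchy--Schwarz, with $\int|\rho|(|\rho|+|\nabla\rho|)\le 2\|\rho\|_{H^1}^2$, gives \eqref{eq:cross_D}.

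The main obstacle is justifying the chain-rule identity with $\sgn$ when $q_D$ or $q_K$ is only $W^{1,\infty}$ in space. This relies on $\nabla|q|=\sgn(q)\nabla q$ a.e., together with $\nabla q=0$ a.e.\ on $\{q=0\}$, exactly as used in the preceding theorems. The other delicate point is tracking the precise numerical constants. The structure of the argument is routine once the compensation identity is written down.
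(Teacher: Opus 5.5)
Your proposal is correct and follows the paper's proof essentially step for step: the compensation identity $\div(\rho q_K\nabla\Phi)=\div(q_D\nabla\rho)$, the $\sgn$ chain rule, and testing with $\psi$ for (i) and with $\rho$ for (ii). Your flag on (i) is justified, because the paper's argument also only yields the constant $\frac{2}{c_\rho}\|\psi\|_{L^2(\Omega)}$ and silently drops $\|\psi\|_{L^2(\Omega)}$; however, the right fix is to keep that factor rather than normalize $\Phi$, since the substitution $\Phi\to s\Phi$, $K^i\to K^i/s$ leaves $\rho$ and the right side of \eqref{eq:cross_K} unchanged while multiplying its left side by $s$.
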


\begin{proof}
    Set $q_D\coloneqq D^1-D^2$ and $q_K\coloneqq K^1-K^2$. Subtracting the equations corresponding to the two coefficient pairs and using the equality of the densities, we obtain the compensation identity
    \begin{equation}\label{eq:compensation_identity}
        \div(\rho q_K\nabla\Phi)
        =\div(q_D\nabla\rho)
        \quad\text{in }Q_T.
    \end{equation}
    To prove \eqref{eq:cross_K}, use the chain rule to rewrite \eqref{eq:compensation_identity} as
    \begin{equation*}
        \div(\rho|q_K|\nabla\Phi)
        =\sgn(q_K)\div(q_D\nabla\rho),
    \end{equation*}
    multiply by $\psi=\Phi-|\Omega|^{-1}\int_\Omega\Phi\,dx$, and integrate by parts. The positivity of $\rho$ and the estimate
    \begin{equation*}
        \|\div(q_D\nabla\rho)(t)\|_{L^2(\Omega)}
        \le 2\|q_D(t)\|_{W^{1,\infty}(\Omega)}\|\rho(t)\|_{H^2(\Omega)}
    \end{equation*}
    give \eqref{eq:cross_K}.
    For the converse estimate, we rewrite \eqref{eq:compensation_identity} as
    \begin{equation*}
        \div(|q_D|\nabla\rho)
        =\sgn(q_D)\div(\rho q_K\nabla\Phi).
    \end{equation*}
    Multiplication by $\rho(t)$ and integration by parts yield
    \begin{align*}
        \int_\Omega |q_D||\nabla\rho|^2\,dx
        &\le \|\rho(t)\|_{L^2(\Omega)}
        \|\div(\rho q_K\nabla\Phi)(t)\|_{L^2(\Omega)} \\
        &\le 2\|\rho(t)\|_{H^1(\Omega)}^2 \|\nabla\Phi\|_{W^{1,\infty}(\Omega)} \|q_K(t)\|_{W^{1,\infty}(\Omega)}.
    \end{align*}
    This is exactly the estimate \eqref{eq:cross_D}.
\end{proof}

These estimates are not tied to a sequential reconstruction procedure. Rather, they quantify the compensation between the diffusion and advection coefficients when fitting the same internal data, thereby illustrating the intrinsic coupling of the simultaneous inverse problem. More precisely, \eqref{eq:compensation_identity} shows that the perturbations of the two transport fluxes balance at the divergence level; it does not assert that the fluxes agree pointwise. This coefficient cross-talk provides an analytical explanation for the strong coupling that makes direct simultaneous minimization challenging in \cref{sec:num}.

The results above describe the spatial coupling between the diffusion and advection coefficients at each observation time. The reduced model \eqref{eq:para}, however, contains an additional structure of a different nature: the transient and steady components are weighted differently in time. In particular, the transient contribution decays exponentially, whereas the steady component becomes increasingly dominant. We next quantify this temporal separation in finite observation intervals.

\subsection{Finite-time separation of the transient and steady regimes}\label{sec:two_stage_estimates}

The exponential structure in \eqref{eq:para} distinguishes two temporal regimes. 
At sufficiently late times, the transient parts $e^{-\gamma t}(D_0-D_s)$ and $e^{-\gamma t}(K_0-K_s)$ are small, so that the dynamics are expected to be well approximated by the steady coefficients $(D_s, K_s)$. 
Conversely, near the initial time, the influence of the steady coefficients enters only through the factor $1-e^{-\gamma t}$. 
We make these two observations quantitative below.

Fix $0<T_1<T_2<T$ and assume that $(D_0,D_s,K_0,K_s,\gamma)\in{\cal D}(F)$. 
We first quantify the accuracy of freezing the time-dependent coefficients at their steady profiles on a late-time interval. 
Let $\rho=F(D_0,D_s,K_0,K_s,\gamma)$ and let $\rho^+$ solve
\begin{equation}\label{eq:frozen_forward}
    \begin{cases}
        \partial_t\rho^+-\div(D_s\nabla\rho^+)+\div(\rho^+K_s\nabla\Phi)=0
        & \text{in }\ \Omega\times(T_2,T), \\
        \partial_\nu\rho^+=0
        & \text{on }\partial\Omega\times(T_2,T), \\
        \rho^+(\cdot,T_2)=\rho(\cdot,T_2)
        & \text{in }\ \Omega.
    \end{cases}
\end{equation}
Note that the full and frozen models start from the same observed density at the beginning of the late-time window.

\begin{proposition}[Late-time frozen-coefficient approximation]\label{prop:frozen_approximation}
    There exists a constant $C=C(\lambda,\Lambda,\Omega,T)>0$, independent of $T_2$ and $\gamma$, such that
    \begin{equation}\label{eq:frozen_error}
        \|\rho-\rho^+\|_{L^2(T_2,T;H^1(\Omega))}
        \le C e^{-\gamma T_2}
        \left(
        \|D_0-D_s\|_{L^\infty(\Omega)}
        +\|(K_0-K_s)\nabla\Phi\|_{L^\infty(\Omega)}
        \right)
        \|\varphi\|_{L^2(\Omega)}. 
    \end{equation}
\end{proposition}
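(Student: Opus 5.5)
Set $w\coloneqq\rho-\rho^+$ on $\Omega\times(T_2,T)$. Subtracting \eqref{eq:frozen_forward} from \eqref{eq:adv-diff} and writing $D=D_s+e^{-\gamma t}(D_0-D_s)$, $K=K_s+e^{-\gamma t}(K_0-K_s)$, we find that $w$ solves a problem of the form \eqref{eq:with_source}:
\begin{equation*}
    \partial_t w-\div(D_s\nabla w)+\div(wK_s\nabla\Phi)=\div f,
    \qquad
    f\coloneqq e^{-\gamma t}\bigl[(D_0-D_s)\nabla\rho-\rho(K_0-K_s)\nabla\Phi\bigr],
\end{equation*}
with zero initial data $w(T_2)=0$. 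The boundary condition $\partial_\nu w=-f\cdot\nu$ holds in the weak sense, because $\partial_\nu\Phi=0$ and $\partial_\nu\rho=0$. Since $D_s\in{\cal T}_+$ and $K_s\in{\cal T}$, the frozen coefficients satisfy \eqref{eq:as} with constants depending only on $\lambda,\Lambda$ (and on $\|\nabla\Phi\|_{L^\infty}$, which we absorb). We therefore apply the energy estimate of \cref{prop:well_posed}(ii) on the interval $(T_2,T)$, whose length is at most $T$. A shift in time reduces this to the setting of the proposition, and the constant can be taken uniform in the interval length up to $T$. This gives
\begin{equation*}
    \|w\|_{L^2(T_2,T;H^1(\Omega))}\le C(\lambda,\Lambda,\Omega,T)\,\|f\|_{L^2(\Omega\times(T_2,T))}.
\end{equation*}

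Next we bound the source. For $t\ge T_2$ we have $e^{-\gamma t}\le e^{-\gamma T_2}$, so
\begin{equation*}
    \|f\|_{L^2(\Omega\times(T_2,T))}\le e^{-\gamma T_2}\Bigl(\|D_0-D_s\|_{L^\infty}\|\nabla\rho\|_{L^2(Q_T)}+\|(K_0-K_s)\nabla\Phi\|_{L^\infty}\|\rho\|_{L^2(Q_T)}\Bigr).
\end{equation*}
To finish, we apply \cref{prop:well_posed}(ii) a second time, now to the full problem for $\rho$ on $(0,T)$ with $f=0$. The full coefficients satisfy $D\ge\lambda$ because $D$ is a convex combination of $D_0,D_s\in{\cal T}_+$, and they satisfy $\|D\|,\|K\nabla\Phi\|\lesssim\Lambda$. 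Hence $\|\rho\|_{L^2(0,T;H^1)}\le C\|\varphi\|_{L^2}$, and combining the estimates yields \eqref{eq:frozen_error}.

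The main obstacle is making sure the constant is independent of $T_2$ and $\gamma$. Independence of $\gamma$ comes from the convex-combination structure: for every $\gamma>0$ the coefficient bounds hold with the same $\lambda,\Lambda$, so neither energy estimate sees $\gamma$. Independence of $T_2$ requires checking that the Gronwall-type constant in the energy estimate grows monotonically with the interval length. Then bounding it by the constant on an interval of length $T$ is legitimate. A further technical point is justifying the weak formulation of the inhomogeneous Neumann condition: since $f\cdot\nu=0$ on $\partial\Omega$, the test-function identity \eqref{eq:weak} for $w$ follows directly by subtracting the weak formulations for $\rho$ and $\rho^+$. Hence no trace issue arises.
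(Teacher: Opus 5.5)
Your proof is correct and follows the paper's argument essentially step for step. It uses the same difference equation with source $\div f$, where $f=e^{-\gamma t}[(D_0-D_s)\nabla\rho-\rho(K_0-K_s)\nabla\Phi]$, the energy estimate of \cref{prop:well_posed} on $(T_2,T)$, the bound $e^{-\gamma t}\le e^{-\gamma T_2}$, and a second energy estimate for $\rho$ on $(0,T)$. Your added remarks on uniformity in $\gamma$ (via the convex-combination structure of $D,K$) and in $T_2$ (via monotonicity of the energy constant in the interval length) spell out what the paper leaves implicit.
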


\begin{proof}
    Set $z\coloneqq\rho-\rho^+$. Subtracting the full equation \eqref{eq:adv-diff} and \eqref{eq:frozen_forward}, we obtain
    \begin{equation}\label{eq:frozen_difference}
        \begin{cases}
            \partial_t z-\div(D_s\nabla z)+\div(zK_s\nabla\Phi)
            =\div f^+
            & \text{in }\ \Omega\times(T_2,T), \\
            \partial_\nu z=0
            & \text{on }\partial\Omega\times(T_2,T), \\
            z(\cdot,T_2)=0
            & \text{in }\ \Omega,
        \end{cases}
    \end{equation}
    where
    \begin{equation*}
        f^+(x,t)
        =e^{-\gamma t}\left[(D_0-D_s)\nabla\rho
        -\rho(K_0-K_s)\nabla\Phi\right].
    \end{equation*}
    Noting that $f^+\cdot\nu=0$ on $\partial\Omega$, the energy estimate in \cref{prop:well_posed} gives
    \begin{align*}
        \|z\|_{L^2(T_2,T;H^1(\Omega))}
        &\le C(\lambda,\Lambda,\Omega,T)\|f^+\|_{L^2(\Omega\times(T_2,T))} \\
        &\le C(\lambda,\Lambda,\Omega,T) e^{-\gamma T_2}
        \left(
        \|D_0-D_s\|_{L^\infty(\Omega)}
        +\|(K_0-K_s)\nabla\Phi\|_{L^\infty(\Omega)}
        \right)
        \|\rho\|_{L^2(T_2,T;H^1(\Omega))}.
    \end{align*}
    A second application of \cref{prop:well_posed} to the full solution yields
    \begin{equation*}
        \|\rho\|_{L^2(T_2,T;H^1(\Omega))}
        \le \|\rho\|_{L^2(0,T;H^1(\Omega))}
        \le C(\lambda,\Lambda,\Omega,T)\|\varphi\|_{L^2(\Omega)},
    \end{equation*}
    which proves \eqref{eq:frozen_error}.
\end{proof}

The factor $e^{-\gamma T_2}$ shows that, on a sufficiently late observation window, replacing the time-dependent coefficients by their steady states produces only an exponentially small forward-model error. Thus, late-time observations can be described accurately by a model with steady coefficients $(D_s,K_s)$, without requiring the limiting regime $t\to\infty$.

We next consider the complementary early-time regime. Rewriting \eqref{eq:para} as
\begin{equation*}
    \begin{aligned}
        D(x,t) &= D_0(x) + (1-e^{-\gamma t})(D_s - D_0), \\
        K(x,t) &= K_0(x) + (1-e^{-\gamma t})(K_s - K_0)
    \end{aligned}
\end{equation*}
shows that the dependence of the observed data on the steady coefficients is attenuated near the initial time. We quantify this effect by perturbing $(D_s,K_s)$ while keeping $(D_0,K_0,\gamma)$ fixed. Let
\begin{equation*}
    D_s^r=D_s+\delta D_s\in{\cal T}_+,
    \qquad
    K_s^r=K_s+\delta K_s\in{\cal T},
\end{equation*}
and denote the corresponding time-dependent coefficients by
\begin{align*}
    D^r(x,t)
    &=D_s^r+e^{-\gamma t}(D_0-D_s^r)
    =D(x,t)+(1-e^{-\gamma t})\delta D_s, \\
    K^r(x,t)
    &=K_s^r+e^{-\gamma t}(K_0-K_s^r)
    =K(x,t)+(1-e^{-\gamma t})\delta K_s.
\end{align*}
Let $\rho^r=F(D_0,D_s^r,K_0,K_s^r,\gamma)$ be the associated solution with the same initial state $\varphi$.

\begin{proposition}[Early-time forward sensitivity]\label{prop:stage2_sensitivity}
    There exists a constant $C=C(\lambda,\Lambda,\Omega,T)>0$, independent of $T_1$ and $\gamma$, such that
    \begin{equation}\label{eq:stage2_sensitivity}
        \|\rho^r-\rho\|_{L^2(0,T_1;H^1(\Omega))}
        \le C(1-e^{-\gamma T_1})
        \left(
        \|\delta D_s\|_{L^\infty(\Omega)}
        +\|\delta K_s\nabla\Phi\|_{L^\infty(\Omega)}
        \right)
        \|\varphi\|_{L^2(\Omega)}. 
    \end{equation}
\end{proposition}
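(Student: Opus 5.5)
The argument mirrors the proof of \cref{prop:frozen_approximation}: apply the energy estimate of \cref{prop:well_posed} to the difference of the two solutions. Set $w\coloneqq\rho^r-\rho$ on $Q_{T_1}=\Omega\times(0,T_1)$. Subtracting the equation for $\rho$ from the one for $\rho^r$, with $D^r=D+(1-e^{-\gamma t})\delta D_s$ and $K^r=K+(1-e^{-\gamma t})\delta K_s$, gives
\begin{equation*}
    \begin{cases}
        \partial_t w-\div(D^r\nabla w)+\div(wK^r\nabla\Phi)=\div f^r & \text{in }\ Q_{T_1}, \\
        \partial_\nu w=0 & \text{on }\ \partial\Omega\times(0,T_1), \\
        w(\cdot,0)=0 & \text{in }\ \Omega,
    \end{cases}
\end{equation*}
where
\begin{equation*}
    f^r(x,t)=(1-e^{-\gamma t})\bigl[\delta D_s\nabla\rho-\rho\,\delta K_s\nabla\Phi\bigr].
\end{equation*}
The initial datum vanishes because both solutions start from $\varphi$. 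Since $\partial_\nu\rho=0$ and $\partial_\nu\Phi=0$, we have $f^r\cdot\nu=0$ on $\partial\Omega$. Hence the Neumann condition in \eqref{eq:with_source} is the homogeneous one, and the problem fits the weak formulation \eqref{eq:weak}.

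Next, check that the coefficients of the $w$-equation satisfy \eqref{eq:as} with constants independent of $\gamma$ and $T_1$. Because $D_s^r,D_0\in{\cal T}_+$, the coefficient $D^r$ is a convex combination of them with weights $e^{-\gamma t}$ and $1-e^{-\gamma t}$. Therefore $\lambda\le D^r\le\Lambda$. Likewise $|K^r|\le\Lambda$, so the drift $\Gamma^r=K^r\nabla\Phi$ is bounded by $\Lambda\|\nabla\Phi\|_{L^\infty(\Omega)}$ and satisfies $\Gamma^r\cdot\nu=0$. The dependence on $\|\nabla\Phi\|_{L^\infty}$ is absorbed into $C$, as in the statement.

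Since $T_1\le T$, the energy estimate on $(0,T_1)$ holds with a constant depending only on $(\lambda,\Lambda,\Omega,T)$. This gives
\begin{equation*}
    \|w\|_{L^2(0,T_1;H^1(\Omega))}\le C\|f^r\|_{L^2(Q_{T_1})}.
\end{equation*}
Because $1-e^{-\gamma t}$ is increasing in $t$, it is bounded by $1-e^{-\gamma T_1}$ on $(0,T_1)$. Hence
\begin{equation*}
    \|f^r\|_{L^2(Q_{T_1})}\le (1-e^{-\gamma T_1})\bigl(\|\delta D_s\|_{L^\infty(\Omega)}+\|\delta K_s\nabla\Phi\|_{L^\infty(\Omega)}\bigr)\|\rho\|_{L^2(0,T_1;H^1(\Omega))}.
\end{equation*}
Applying \cref{prop:well_posed} once more to $\rho$ itself gives $\|\rho\|_{L^2(0,T;H^1(\Omega))}\le C\|\varphi\|_{L^2(\Omega)}$. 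Combining the three displays yields \eqref{eq:stage2_sensitivity}.

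The only delicate point is uniformity of the constant in $T_1$ and $\gamma$. The energy constant must be taken on the larger interval $(0,T)$, which controls every sub-interval. It must also use only the uniform bounds \eqref{eq:as}, which the convex-combination structure guarantees for every $\gamma>0$. Once this is observed, no step is a real obstacle.
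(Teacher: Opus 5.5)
Your proposal is correct and follows essentially the same argument as the paper. You form the difference equation for $w=\rho^r-\rho$ with divergence-form source $(1-e^{-\gamma t})\bigl[\delta D_s\nabla\rho-\rho\,\delta K_s\nabla\Phi\bigr]$, apply the energy estimate of \cref{prop:well_posed}, use $1-e^{-\gamma t}\le 1-e^{-\gamma T_1}$, and finish with a second energy estimate for $\rho$. The paper leaves implicit what you check explicitly, namely the convex-combination bounds on $D^r,K^r$ and the uniformity of the constant for $T_1\le T$.
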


\begin{proof}
    Set $w\coloneqq\rho^r-\rho$. Expressing the equation for $\rho$ in terms of the perturbed coefficients $D^r$ and $K^r$ and then subtracting it from the equation for $\rho^r$, we obtain
    \begin{equation*}
        \begin{cases}
            \partial_t w-\div(D^r\nabla w)+\div(wK^r\nabla\Phi)=\div f^-
            & \text{in }\Omega\times(0,T_1), \\
            \partial_\nu w=0
            & \text{on }\partial\Omega\times(0,T_1), \\
            w(\cdot,0)=0
            & \text{in }\Omega,
        \end{cases}
    \end{equation*}
    where
    \begin{equation*}
        f^-(x,t)
        =(1-e^{-\gamma t})
        \left[\delta D_s\nabla\rho
        -\rho\delta K_s\nabla\Phi\right].
    \end{equation*}
    As in the preceding proof, the energy estimate in \cref{prop:well_posed} gives
    \begin{align*}
        \|w\|_{L^2(0,T_1;H^1(\Omega))}
        &\le C(\lambda,\Lambda,\Omega,T)\|f^-\|_{L^2(\Omega\times(0,T_1))} \\
        &\le C(\lambda,\Lambda,\Omega,T) (1-e^{-\gamma T_1})
        \left(
        \|\delta D_s\|_{L^\infty(\Omega)}
        +\|\delta K_s\nabla\Phi\|_{L^\infty(\Omega)}
        \right)
        \|\rho\|_{L^2(0,T_1;H^1(\Omega))} \\
        &\le C(\lambda,\Lambda,\Omega,T) (1-e^{-\gamma T_1})
        \left(
        \|\delta D_s\|_{L^\infty(\Omega)}
        +\|\delta K_s\nabla\Phi\|_{L^\infty(\Omega)}
        \right)\|\varphi\|_{L^2(\Omega)}.
    \end{align*}
    This proves \eqref{eq:stage2_sensitivity}.
\end{proof}

This is a forward sensitivity estimate, rather than an inverse stability result. It shows that perturbations of the steady coefficients affect the early-time solution linearly, with an additional attenuation factor $1-e^{-\gamma T_1}$. 
Hence the early-time dynamics are comparatively insensitive to moderate inaccuracies in $(D_s, K_s)$, provided that the observation window remains within the transient regime.

Together, \cref{prop:frozen_approximation,prop:stage2_sensitivity} reveal a finite-time separation of information: late-time observations are accurately described by the steady coefficients, whereas early-time observations are comparatively insensitive to perturbations of those coefficients. 
This temporal structure will motivate the reconstruction strategy introduced in \cref{sec:num}.

\subsection{Long-time asymptotics and identification of the relaxation rate}\label{sec:long_time_gamma}

The preceding subsection concerns the separation of the transient and steady regimes in a finite observation window. 
A different question is whether the common relaxation rate $\gamma$ within the reduced structure \eqref{eq:para} can be identified from the density evolution. 
For this purpose, finite-time approximation alone is insufficient, and we turn to the genuine long-time dynamics of the reduced model.

We first establish exponential convergence to a steady state $\rho_s$. Once the steady coefficients $(D_s,K_s)$ are known, this asymptotic expansion allows us to extract $\gamma$ from the residual relative to the steady-state operator.

\begin{lemma}\label{lem:steady}
    For every $m>0$, there exists a unique weak solution $\rho_s\in H^1(\Omega)$ of
    \begin{equation}\label{eq:steady}
        \begin{cases}
            -\div(D_s\nabla\rho_s) + \div(\rho_s K_s\nabla\Phi) = 0 & \text{in}\ \Omega, \\
            \partial_\nu \rho_s = 0 & \text{on}\ \partial\Omega, \\
            \rho_s > 0,\ \int_\Omega \rho_s\,dx = m.
        \end{cases}
    \end{equation}
    Moreover, $\rho_s\in H^2(\Omega)$ and hence, $\rho_s\in C(\overline{\Omega})$.
\end{lemma}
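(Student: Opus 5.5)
The plan is to treat \eqref{eq:steady} as the kernel problem for the non-self-adjoint Neumann operator $L u\coloneqq-\div(D_s\nabla u)+\div(uK_s\nabla\Phi)$. The argument has two parts. First, use Fredholm theory together with the parabolic positivity results of \cref{sec:eq} to show that $\ker L$ is one-dimensional and spanned by a positive function. Then normalize that function to have mass $m$. Throughout we use $D_s\in{\cal T}_+$, $K_s\in{\cal T}$ and $\nabla\Phi\in W^{1,\infty}(\Omega)$. For the $H^2$ part we additionally assume, as in \cref{prop:high_reg}, that $D_s,K_s\in W^{1,\infty}(\Omega)$.

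\emph{Step 1 (Fredholm setup).} Write the bilinear form
\begin{equation*}
a(u,v)=\int_\Omega (D_s\nabla u-uK_s\nabla\Phi)\cdot\nabla v\,dx ,
\end{equation*}
and note that $a(u,v)+\mu(u,v)_{L^2}$ is coercive on $H^1(\Omega)$ for $\mu$ large, by Young's inequality. By Lax--Milgram and the compact embedding $H^1\hookrightarrow L^2$, $L$ is Fredholm of index zero. Its adjoint is $L^*v=-\div(D_s\nabla v)-K_s\nabla\Phi\cdot\nabla v$ with Neumann condition, and $L^*1=0$. Hence $\dim\ker L=\dim\ker L^*\ge1$. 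The same computation shows that every $u\in\ker L$ satisfies $\int_\Omega Lu\,dx=0$ automatically, which is consistent with mass conservation.

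\emph{Step 2 (kernel of the adjoint is the constants).} Next show $\ker L^*=\mathrm{span}\{1\}$. This follows from the strong maximum principle for the nondivergence-drift operator $L^*$ with Neumann data. Any $v\in\ker L^*$ attains its maximum, and by the Hopf lemma and the strong maximum principle (e.g.\ \cite[Theorem 6.43]{Lieberman1996} in its elliptic version, or a weak Harnack/De Giorgi argument if the coefficients are only bounded) it must be constant. Therefore $\ker L=\mathrm{span}\{u_0\}$ for a single $u_0$. This is where I expect the main subtlety to lie, namely justifying the maximum principle at the stated regularity. If necessary I would invoke the $W^{1,\infty}$ assumptions so that $v\in H^2$ and the classical arguments apply.

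\emph{Step 3 (sign and normalization).} To show $u_0$ can be taken positive, use the parabolic flow. Let $S(t)$ be the solution operator of \eqref{eq:adv-diff} with the frozen coefficients $(D_s,K_s)$. By \cref{prop:well_posed}, $S(t)$ preserves nonnegativity and mass, and $S(t)u_0=u_0$. Writing $u_0=u_0^+-u_0^-$ gives
\begin{equation*}
|u_0|=|S(t)u_0|\le S(t)|u_0| .
\end{equation*}
Both $|u_0|$ and $S(t)|u_0|$ have the same integral, so equality holds, and hence $S(t)|u_0|=|u_0|$. Thus $|u_0|\in\ker L$, and $|u_0|=c\,u_0$ by one-dimensionality, so $u_0$ has a fixed sign. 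The strong maximum principle argument after \cref{prop:high_reg} then gives strict positivity, because $|u_0|=S(t)|u_0|>0$. Since $\int_\Omega u_0\,dx\neq0$, set $\rho_s=m\,u_0/\int_\Omega u_0\,dx$. Uniqueness follows from one-dimensionality of $\ker L$ together with the mass constraint.

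\emph{Step 4 (regularity).} Expand the equation as $-\div(D_s\nabla\rho_s)=-\div(\rho_sK_s\nabla\Phi)\in L^2(\Omega)$. The right-hand side lies in $L^2$ because $\rho_s\in H^1$ and $K_s\nabla\Phi\in W^{1,\infty}$. Moreover, the Neumann data are compatible, since $\partial_\nu\Phi=0$. Elliptic $H^2$ regularity on the $C^2$ domain, as used in the proof of \cref{prop:high_reg}, then gives $\rho_s\in H^2(\Omega)$. Finally, the embedding $H^2\hookrightarrow C(\overline\Omega)$ in two dimensions gives continuity.
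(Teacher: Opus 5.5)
Your proof is correct, but it takes a different route from the paper's.

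\textbf{The paper's route.} It applies the principal-eigenvalue theorem \cite[Section 6.5.2]{Evans2010} to the shifted operator $\mathcal{L}_s+\mu$. This gives a positive eigenfunction and a simple eigenvalue at once. Integrating $\mathcal{L}_s w=(r-\mu)w$ over $\Omega$ forces $r=\mu$, so $w\in\ker\mathcal{L}_s$. Simplicity then gives uniqueness after normalizing the mass.

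\textbf{Your route.} You get a nontrivial kernel from the Fredholm alternative together with $L^*1=0$. One-dimensionality comes from a maximum principle for $L^*$. The sign comes from a Perron--Frobenius-type argument that uses only positivity and mass conservation of the frozen flow: $|u_0|\le S(t)|u_0|$ with equal integrals.

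\textbf{What each buys.} The paper's argument is shorter. However, Evans states that theorem for Dirichlet problems with smooth coefficients, so its use here tacitly assumes a Krein--Rutman argument for the Neumann resolvent with $W^{1,\infty}$ coefficients. Your argument needs only Fredholm theory and facts already recorded in \cref{prop:well_posed}. As a by-product it identifies $\ker L^*$ with the constants, which is exactly the solvability condition $\int_\Omega f\,dx=0$ for $Lu=f$.

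\textbf{Two small remarks.} First, do Step 4 before invoking strict positivity. Then $|u_0|=\pm u_0\in H^2$ is admissible initial data for the positivity statement after \cref{prop:high_reg}.

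Second, the step you flag as delicate can be avoided entirely. The identity $S(t)|u|=|u|$ holds for every $u\in\ker L$ without using one-dimensionality, so $u^\pm\in\ker L$. Interior strict positivity of nonnegative nontrivial kernel elements then forces one of $u^\pm$ to vanish. Hence every nonzero kernel element has a strict sign. For any two kernel elements $u_1,u_2$ with $u_2\neq0$, the element $u_1-\bigl(\int_\Omega u_1\,dx/\int_\Omega u_2\,dx\bigr)u_2$ has zero integral and therefore vanishes. This gives $\dim\ker L\le1$ with no Hopf lemma for $L^*$. For an $H^2$ function, such a lemma would otherwise need extra regularity, e.g.\ $W^{2,p}$ with $p>2$, to make sense of the normal derivative at a boundary maximum.
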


\begin{proof}
    Define the operator
    \begin{equation*}
        {\cal L}_s u \coloneqq -\div(D_s\nabla u) + \div(u K_s\nabla\Phi) = -D_s\Delta u + (K_s\nabla\Phi - \nabla D_s)\cdot\nabla u + \div(K_s\nabla\Phi)u
    \end{equation*}
    with homogeneous Neumann boundary condition. Choosing $\mu\ge \|\div(K_s\nabla\Phi)\|_{L^\infty(\Omega)}$, the operator ${\cal L}_s + \mu$ has a nonnegative zero-order term. Then by \cite[Section 6.5.2]{Evans2010}, the operator ${\cal L}_s + \mu$ has a principal eigenvalue $r>0$, which is simple, with a positive eigenfunction $w\in H^1(\Omega)$. That is,
    \begin{equation*}
        \begin{cases}
            {\cal L}_s w = (r - \mu)w & \text{in}\ \Omega, \\
            \partial_\nu w = 0 & \text{on}\ \partial\Omega.
        \end{cases}
    \end{equation*}
    Integrating the equation over $\Omega$ and using the homogeneous Neumann boundary condition yield that
    \begin{equation*}
        0 = \int_\Omega {\cal L}_s w\,dx = (r - \mu)\int_\Omega w\,dx.
    \end{equation*}
    Since $w$ is positive, $\int_\Omega w\,dx > 0$. So $r - \mu = 0$, and therefore ${\cal L}_s w = 0$ in $\Omega$. Finally, normalizing by the prescribed mass, we get a unique positive steady state
    \begin{equation*}
        \rho_s = \frac{m}{\int_\Omega w\,dx} w \in H^1(\Omega)
    \end{equation*}
    satisfying \eqref{eq:steady}. Finally, since $D_s,K_s\nabla\Phi\in W^{1,\infty}(\Omega)$ and $\partial\Omega\in C^2$, standard elliptic regularity for the Neumann problem implies that $\rho_s\in H^2(\Omega)$, and thus $\rho_s\in C(\overline{\Omega})$ by Sobolev's embedding.
\end{proof}

\begin{proposition}\label{prop:rate}
    Let $\rho_s\in H^2(\Omega)$ be the unique solution of \eqref{eq:steady}.
    Then there exist positive constants $C,\alpha$ and $T_*$, depending on $\lambda,A,\Omega,\gamma,\rho_s$, such that
    \begin{equation}\label{eq:rate}
        \|\rho(t) - \rho_s\|_{H^1(\Omega)} \le C e^{-\alpha t}, \qquad t\ge T_*.
    \end{equation}
\end{proposition}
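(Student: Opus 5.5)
We prove \eqref{eq:rate} in two steps: first for the frozen steady operator ${\cal L}_s$, then for the full time-dependent problem by treating the transient part of the coefficients as an exponentially decaying perturbation. Throughout, $m=\int_\Omega\varphi\,dx$ is the mass preserved by \cref{prop:well_posed}(i), and $\rho_s$ is the steady state of \cref{lem:steady} with this mass.

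\emph{Step 1: spectral gap for ${\cal L}_s$.} Since ${\cal L}_s\rho_s=0$ with $\rho_s>0$ continuous on $\overline\Omega$, the flux can be written in symmetric form:
\begin{equation*}
    D_s\nabla u-uK_s\nabla\Phi=D_s\rho_s\nabla(u/\rho_s)+\frac{u}{\rho_s}\bigl(D_s\nabla\rho_s-\rho_sK_s\nabla\Phi\bigr).
\end{equation*}
We first show that the bracketed steady flux $J_s=D_s\nabla\rho_s-\rho_sK_s\nabla\Phi$ vanishes. Indeed $J_s$ is divergence-free, and $J_s\cdot\nu=0$ because $\partial_\nu\rho_s=\partial_\nu\Phi=0$. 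It is natural to conjecture $J_s=0$, as in the gradient case $\rho_s\propto e^{\int K_s\nabla\Phi/D_s}$, but in general it is not; so we avoid this and use relative entropy instead.

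For a solution $v$ of $\partial_tv+{\cal L}_sv=0$ with $\int_\Omega v=m$, set $h=v/\rho_s$. Then
\begin{equation*}
    \frac{d}{dt}\int_\Omega \rho_s(h-1)^2\,dx=-2\int_\Omega D_s\rho_s|\nabla h|^2\,dx.
\end{equation*}
This follows from the standard computation for Fokker--Planck operators with a positive invariant density: the $J_s$ terms cancel after integration by parts, using $\div J_s=0$ and $J_s\cdot\nu=0$. The weighted Poincar\'e inequality, with weights bounded above and below since $\lambda\le D_s$ and $0<\min\rho_s\le\rho_s\le\max\rho_s$, together with $\int\rho_s(h-1)=0$, gives exponential decay at some rate $\alpha_0>0$ in weighted $L^2$.

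\emph{Step 2: the full problem.} Write $e=\rho-\rho_s$. Then
\begin{equation*}
    \partial_te+{\cal L}_se=\div f,\qquad f=e^{-\gamma t}\bigl[(D_0-D_s)\nabla\rho-\rho(K_0-K_s)\nabla\Phi\bigr],
\end{equation*}
with $f\cdot\nu=0$ on $\partial\Omega$ and $\int_\Omega e\,dx=0$. Repeating the energy identity for $h=e/\rho_s$ with the source term, and applying Cauchy--Schwarz and Young to absorb $\int f\cdot\nabla h$ into the dissipation, we obtain
\begin{equation*}
    y'(t)\le-\alpha_0y(t)+Ce^{-2\gamma t}\|\rho(t)\|_{H^1(\Omega)}^2,\qquad y(t)=\int_\Omega\rho_sh^2\,dx.
\end{equation*}

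The main obstacle is to bound $\|\rho(t)\|_{H^1}$ uniformly in time, since \cref{prop:well_posed} gives constants depending on $T$. We handle this by a bootstrap. First, $L^2$ energy estimates on unit intervals $[k,k+1]$ give $\|\rho\|_{L^2(k,k+1;H^1)}\le C\|\rho(k)\|_{L^2}$. The constant is uniform in $k$ because the coefficient bounds are uniform and, by \cref{prop:well_posed}, depends only on $\lambda,\Lambda,\Omega$ and the interval length. Combined with the differential inequality above (integrated over unit intervals), this yields $y(k+1)\le e^{-\alpha_0}y(k)+Ce^{-2\gamma k}(1+y(k))$. For $k\ge T_*$, chosen so that $Ce^{-2\gamma T_*}$ is small, this gives $y(k)\le Ce^{-2\alpha k}$ with $\alpha=\min(\alpha_0,2\gamma)/2$, up to a slightly reduced rate.

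\emph{Step 3: upgrade to $H^1$.} Finally, we pass from weighted $L^2$ decay to $H^1$ decay. We test the $e$-equation with $\partial_te$, or equivalently use the parabolic smoothing estimate of \cref{prop:high_reg} on unit intervals, which is applicable since $D,K\in C^1([0,T];W^{1,\infty})$ with bounds uniform in $t$. This gives $\|e(t)\|_{H^1}\le C\bigl(\|e\|_{L^2(t-1,t;L^2)}+\|f\|_{L^2(\Omega\times(t-1,t))}\bigr)$, and hence \eqref{eq:rate}.
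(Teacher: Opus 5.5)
Your Steps 1--2 are sound and reach the $L^2$ decay by a different route from the paper. The paper writes $\nabla\rho=(v+1)\nabla\rho_s+\rho_s\nabla v$ and absorbs the quadratic part of the source into the dissipation once $e^{-\gamma t}$ is small. You instead bound the source by $e^{-2\gamma t}\|\rho(t)\|_{H^1}^2$, control $\int_k^{k+1}\|\rho\|_{H^1}^2$ by translation-invariant energy estimates on unit intervals, and close with a discrete Gronwall argument. That works; just delete the false start in Step 1 about $J_s$ vanishing, since you only use $\div J_s=0$ and $J_s\cdot\nu=0$.

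\textbf{The gap is Step 3.} The estimate $\|e(t)\|_{H^1}\le C(\|e\|_{L^2(t-1,t;L^2)}+\|f\|_{L^2(\Omega\times(t-1,t))})$ does not hold for a source in divergence form. An $L^2$ flux $f$ only gives $L^\infty(L^2)\cap L^2(H^1)$ control of $e$. Testing with $\partial_t e$ produces $\int_\Omega f\cdot\nabla\partial_t e\,dx$, which nothing controls. Pointwise-in-time $H^1$ bounds require $\div f=e^{-\gamma t}{\cal B}\rho\in L^2$, i.e.\ a bound on $\|\rho\|_{L^2(t-1,t;H^2(\Omega))}$ uniform in $t$. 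That is stronger than the uniform $H^1$ bound you flagged as the main obstacle, and you never prove it. Appealing to \cref{prop:high_reg} does not supply it: that result propagates $H^2$ regularity from $H^2$ data with a $T$-dependent constant. Using it on $[t-1,t]$ presupposes a uniform bound on $\|\rho(t-1)\|_{H^2}$, which is circular; it is not a smoothing estimate.

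\textbf{The missing step.} Split ${\cal B}\rho={\cal B}\rho_s+{\cal B}e$ and move $e^{-\gamma t}{\cal B}e$ to the left-hand side, so that
\[
\partial_t e-\div(D\nabla e)+\div(eK\nabla\Phi)=e^{-\gamma t}{\cal B}\rho_s .
\]
The left side now carries the full coefficients $D(x,t),K(x,t)$, whose $C^1W^{1,\infty}$ bounds are uniform in time. The right side is $e^{-\gamma t}$ times a fixed $L^2$ function, because $\rho_s\in H^2(\Omega)$. Apply the standard $H^1$ parabolic estimate to $\eta e$, with $\eta$ a temporal cut-off on a window of fixed length. This bounds $\|e(\tau)\|_{H^1}^2$ by $\int_{\tau-2}^{\tau-1}\|e\|_{L^2}^2\,dt$ plus $Ce^{-2\gamma\tau}$, uniformly in $\tau$. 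Your Step 2 then yields \eqref{eq:rate}; this is exactly how the paper concludes. Alternatively, first derive $\sup_t\|\rho\|_{L^2(t-1,t;H^2)}<\infty$ by the same cut-off argument applied to $\rho$ itself, using the uniform $L^2$ bound from Step 2. Then use $\|\div f\|_{L^2}$ in place of $\|f\|_{L^2}$ in your estimate.
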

\begin{proof}
    In this proof, $C$ denotes a generic positive constant that may change from line to line. Let
    \begin{equation*}
        d\coloneqq D_0-D_s, \quad k\coloneqq K_0-K_s, \quad {\cal B}w \coloneqq \div(d\nabla w) - \div(wk\nabla\Phi).
    \end{equation*}
    Then from the equation \eqref{eq:adv-diff} with coefficients \eqref{eq:para} and the steady-state equation \eqref{eq:steady}, the difference $u\coloneqq \rho - \rho_s$ satisfies the equation
    \begin{equation}\label{eq:diff}
        \partial_t u + {\cal L}_s u = e^{-\gamma t}{\cal B}\rho \quad \text{in}\ \Omega\times[0,\infty)
    \end{equation}
    and
    \begin{equation*}
        \int_\Omega u(x,t)\,dx = 0, \quad \forall\,t\ge 0
    \end{equation*}
    by the mass conservation of $\rho$. In the following, we first prove an $L^2$ convergence estimate by a weighted energy method and then upgrade it to an $H^1$ estimate by standard parabolic regularity.

    By \cref{prop:high_reg} we know that for all $t\ge 0$, $\rho(t)\in H^2(\Omega)$, and thus $u(t)\in H^2(\Omega)$.
    By the positivity of $\rho_s\in C(\overline{\Omega})$, we write $v(t) = \rho_s^{-1} u(t)\in H^2(\Omega)$ and define the weighted energy terms
    \begin{equation*}
        E_0(t) \coloneqq \frac{1}{2}\int_\Omega v(t)^2\rho_s\,dx, \quad
        E_1(t) \coloneqq \frac{1}{2}\int_\Omega |\nabla v(t)|^2\rho_s\,dx,
    \end{equation*}
    which are equivalent to the usual $L^2$ ones. Then
    \begin{align}
        E_0'(t) = \int_\Omega \partial_t u(t)v(t)\,dx
        &= \int_\Omega (e^{-\gamma t}{\cal B}\rho - {\cal L}_s u)v\,dx \notag \\
        &= -e^{-\gamma t} \int_\Omega (d\nabla\rho - \rho k\nabla\Phi)\cdot\nabla v\,dx - \int_\Omega v{\cal L}_s u\,dx. \label{eq:derivative}
    \end{align}
    To compute the last term, denote the steady-state flux $J_s \coloneqq D_s\nabla\rho_s - \rho_s K_s\nabla\Phi$, which satisfies
    \begin{equation}\label{eq:flux}
        \begin{cases}
            \div J_s = 0 & \text{in}\ \Omega, \\
            J_s\cdot\nu = 0 & \text{on}\ \partial\Omega.
        \end{cases}
    \end{equation}
    Since
    \begin{equation*}
        D_s\nabla u - u K_s\nabla\Phi = D_s\nabla(\rho_s v) - v\rho_s K_s\nabla\Phi = \rho_s D_s\nabla v + v J_s,
    \end{equation*}
    we have
    \begin{align}
        \int_\Omega v{\cal L}_s u\,dx
        &= \int_\Omega \rho_s D_s|\nabla v|^2\,dx + \int_\Omega vJ_s\cdot\nabla v\,dx \notag \\
        &= \int_\Omega \rho_s D_s|\nabla v|^2\,dx + \frac{1}{2} \int_\Omega J_s\cdot\nabla(v^2)\,dx 
        = \int_\Omega \rho_s D_s|\nabla v|^2\,dx, \label{eq:dissipation}
    \end{align}
    where the last equality holds due to \eqref{eq:flux}. Substituting \eqref{eq:dissipation} into \eqref{eq:derivative} yields
    \begin{equation}\label{eq:derivative2}
        E_0'(t) = -e^{-\gamma t}\int_\Omega (d\nabla \rho - \rho k\nabla\Phi)\cdot\nabla v\,dx - \int_\Omega \rho_s D_s|\nabla v|^2\,dx.
    \end{equation}

    Moreover, noting that $\rho = u + \rho_s = \rho_s(v+1)$, we have $\nabla\rho = (v+1)\nabla\rho_s + \rho_s\nabla v$. Thus,
    \begin{equation*}
        E_0'(t) = -e^{-\gamma t}\int_\Omega (v+1)(d\nabla\rho_s - \rho_s k\nabla\Phi)\cdot\nabla v\,dx - \int_\Omega \rho_s D|\nabla v|^2\,dx.
    \end{equation*}
    By H\"older's inequality and Sobolev's embedding,
    \begin{align*}
        \left|\int_\Omega (d\nabla\rho_s - \rho_s k\nabla\Phi)\cdot\nabla v\,dx\right| &\le C E_1(t)^{1/2}, \\
        \left|\int_\Omega v(d\nabla\rho_s - \rho_s k\nabla\Phi)\cdot\nabla v\,dx\right| &\le C\big(\|v\|_{L^4(\Omega)} \|\nabla\rho_s\|_{L^4(\Omega)} + \|v\|_{L^2(\Omega)}\|\rho_s\|_{L^\infty(\Omega)}\big)\|\nabla v\|_{L^2(\Omega)} \\
        &\le C\|v\|_{H^1(\Omega)} \|\rho_s\|_{H^2(\Omega)} \|\nabla v\|_{L^2(\Omega)} \\
        &\le C\big(E_0(t) + E_1(t)\big)^{1/2} E_1(t)^{1/2}.
    \end{align*}
    Since
    \begin{equation*}
        \int_\Omega v(t)\rho_s\,dx = \int_\Omega u(t)\,dx = 0,
    \end{equation*}
    the weighted Poincar\'e inequality reads $E_0(t) \le C_P E_1(t)$, where $C_P$ depends on $\rho_s$ and $\Omega$. Therefore, $D\ge\lambda$ and Young's inequality imply that
    \begin{align*}
        E_0'(t) &\le C_1 e^{-\gamma t} E_1(t)^{1/2} + C_1 e^{-\gamma t} E_1(t) - 2\lambda E_1(t) \\
        &\le \frac{\lambda}{2} E_1(t) + C_2 e^{-2\gamma t} + C_1 e^{-\gamma t} E_1(t) - 2\lambda E_1(t).
    \end{align*}
    Taking $T_0>0$ such that $C_1 e^{-\gamma T_0}\le \lambda/2$ and using the weighted Poincar\'e inequality yield
    \begin{equation*}
        E_0'(t) \le C_2 e^{-2\gamma t} - \lambda E_1(t) \le C_2 e^{-2\gamma t} - \lambda C_P^{-1} E_0(t), \quad \forall\,t\ge T_0.
    \end{equation*}
    Set $\beta=\lambda C_P^{-1}$. Gronwall's inequality on $[T_0,t]$ gives
    \begin{equation*}
        E_0(t)
        \le e^{-\beta(t-T_0)}E_0(T_0)
        +C_2\int_{T_0}^t e^{-\beta(t-s)}e^{-2\gamma s}\,ds
        \le C e^{-2\alpha_0 t}, \qquad t\ge T_0,
    \end{equation*}
    for any fixed $\alpha_0$ satisfying $0<\alpha_0<\min\{\gamma,\beta/2\}$. This proves that
    \begin{equation}\label{eq:rate0}
        \|u(t)\|_{L^2(\Omega)} \le C e^{-\alpha_0 t}, \quad \forall\,t\ge T_0.
    \end{equation}

    Finally, we use standard parabolic regularity to upgrade the $L^2$ convergence rate to an $H^1$ estimate. Note that $u$ satisfies the equation
    \begin{equation}\label{eq:diff2}
        \partial_t u - \div(D(x,t)\nabla u) + \div(u K(x,t)\nabla\Phi) = e^{-\gamma t}{\cal B}\rho_s \quad \text{in}\ \Omega\times[0,\infty).
    \end{equation}
    For each $\tau\ge T_0+2$, choose a temporal cut-off function $\eta\in C^\infty([\tau-3,\tau])$ satisfying
    \begin{equation*}
        0\le\eta(t)\le1,\qquad
        \eta(t)=0 \text{ on }[\tau-3,\tau-2],\qquad
        \eta(t)=1 \text{ on }[\tau-1,\tau]
    \end{equation*}
    and $|\eta'(t)| \le C$ for all $t\in[\tau-3,\tau]$. Set $w=\eta u$. It follows from \eqref{eq:diff2} that $w$ solves
    \begin{equation}
        \partial_t w - \div(D(x,t)\nabla w) + \div(w K(x,t)\nabla\Phi) = \eta'(t)u + \eta(t)e^{-\gamma t}{\cal B}\rho_s \quad \text{in}\ \Omega\times[\tau-3,\tau].
    \end{equation}
    By the standard parabolic regularity estimate \cite[Section 7.1]{Evans2010}, we have
    \begin{align*}
        \|w\|_{C([\tau-3,\tau];H^1(\Omega))}^2 &\le C\|\eta'(t)u + \eta(t)e^{-\gamma t}{\cal B}\rho_s\|_{L^2(\Omega\times[\tau-3,\tau])}^2 \\
        &\le C\left(\int_{\tau-2}^{\tau-1} \|u(t)\|_{L^2(\Omega)}^2\,dt + \|{\cal B}\rho_s\|_{L^2(\Omega)}^2 \int_{\tau-3}^\tau e^{-2\gamma t}\,dt\right).
    \end{align*}
    Since $\rho_s\in H^2(\Omega)$ by \cref{lem:steady} and $\cal B$ is a second-order differential operator, $\|{\cal B}\rho_s\|_{L^2(\Omega)}$ is finite. Thus, from \eqref{eq:rate0} we derive
    \begin{equation*}
        \|u(\tau)\|_{H^1(\Omega)}^2 = \|w(\tau)\|_{H^1(\Omega)}^2 \le C e^{-2\alpha \tau}
    \end{equation*}
    with $\alpha = \min\{\alpha_0, \gamma\}$. Taking $T_*=T_0+2$ completes the proof.
\end{proof}

We now identify the relaxation rate when the steady-state operator ${\cal L}_s$ is known. Define the observable residual
\begin{equation*}
    R_s(t) \coloneqq \partial_t \rho + {\cal L}_s \rho = e^{-\gamma t}{\cal B}\rho = e^{-\gamma t}[{\cal B}\rho_s + {\cal B}(\rho - \rho_s)].
\end{equation*}
By \cref{prop:rate} and the boundedness of ${\cal B}\colon H^1(\Omega)\to H^{-1}(\Omega)$, we have
\begin{equation*}
    \|{\cal B}(\rho - \rho_s)\|_{H^{-1}(\Omega)} = O(e^{-\alpha t}) \ \text{as}\ t\to\infty.
\end{equation*}
So
\begin{equation*}
    \|R_s(t)\|_{H^{-1}(\Omega)} = e^{-\gamma t}\big(\|{\cal B}\rho_s\|_{H^{-1}(\Omega)} + O(e^{-\alpha t})\big) \ \text{as}\ t\to\infty.
\end{equation*}
Thus, if ${\cal B}\rho_s\ne 0$ in $H^{-1}(\Omega)$, then
\begin{equation}\label{eq:gamma_identification}
    \gamma = -\lim_{t\to\infty} \frac{1}{t}\log\|R_s(t)\|_{H^{-1}(\Omega)}.
\end{equation}
This yields the following conditional identifiability result.
\begin{theorem}[Identification of the relaxation rate]\label{thm:gamma}
    Assume that ${\cal B}\rho_s\ne 0$ in $H^{-1}(\Omega)$. If $D_s$ and $K_s$ are known and $\rho$ is observed on a long-time interval $(T_*,\infty)$, then the common relaxation rate $\gamma$ is uniquely determined by \eqref{eq:gamma_identification}.
\end{theorem}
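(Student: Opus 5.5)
The plan is to show that the residual $R_s(t)$ can be computed from the data once $D_s$ and $K_s$ are known, and then to read off $\gamma$ from its exponential decay rate. Two facts have to be established: that $R_s$ is observable, and that the limit in \eqref{eq:gamma_identification} exists and equals $\gamma$.

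\textbf{Observability of $R_s$.} The operator ${\cal L}_s$ involves only $D_s$, $K_s$ and the prescribed $\Phi$. By \cref{prop:high_reg}, $\rho(t)\in H^2(\Omega)$ and $\partial_t\rho(t)\in L^2(\Omega)$. Hence
\begin{equation*}
    R_s(t)=\partial_t\rho(t)+{\cal L}_s\rho(t)
\end{equation*}
is computed from the observed $\rho$ on $(T_*,\infty)$ without knowing $D_0$, $K_0$ or $\gamma$. It is well defined in $L^2(\Omega)\subset H^{-1}(\Omega)$.

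\textbf{Factorisation of the residual.} Inserting \eqref{eq:para} into \eqref{eq:adv-diff} gives the identity
\begin{equation*}
    \partial_t\rho+{\cal L}_s\rho=e^{-\gamma t}{\cal B}\rho .
\end{equation*}
I then split ${\cal B}\rho={\cal B}\rho_s+{\cal B}(\rho-\rho_s)$. Since $d$, $k$ and $\nabla\Phi$ are bounded, ${\cal B}\colon H^1(\Omega)\to H^{-1}(\Omega)$ is bounded. Combining this with \cref{prop:rate} gives
\begin{equation*}
    \|{\cal B}(\rho(t)-\rho_s)\|_{H^{-1}(\Omega)}\le Ce^{-\alpha t}, \qquad t\ge T_*.
\end{equation*}

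\textbf{Two-sided bound and the limit.} Write $b\coloneqq\|{\cal B}\rho_s\|_{H^{-1}(\Omega)}$, which is positive by hypothesis. The triangle inequality gives
\begin{equation*}
    e^{-\gamma t}\big(b-Ce^{-\alpha t}\big)\le\|R_s(t)\|_{H^{-1}(\Omega)}\le e^{-\gamma t}\big(b+Ce^{-\alpha t}\big).
\end{equation*}
For $t$ large enough, $Ce^{-\alpha t}<b/2$, so $\log\|R_s(t)\|_{H^{-1}(\Omega)}=-\gamma t+O(1)$. Dividing by $-t$ and letting $t\to\infty$ yields \eqref{eq:gamma_identification}.

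\textbf{Uniqueness.} Suppose two rates $\gamma_1,\gamma_2$ (possibly with different $D_0,K_0$, but the same $D_s$, $K_s$ and the same observed $\rho$) each satisfy the assumption. They produce the same function $R_s$, because $R_s$ depends only on the data and ${\cal L}_s$. Therefore both equal the same limit, so $\gamma_1=\gamma_2$.

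\textbf{Main obstacle.} The delicate point is the lower bound, which requires $b>0$ and an error term that is genuinely $o(1)$. The constants $C$ and $\alpha$ in \cref{prop:rate} depend on $\gamma$ and on the unknown $D_0,K_0$. However, the argument only needs that some such finite constants exist for each admissible parameter set, not uniformity over parameters. It also uses that $\rho_s$ is fixed by the known mass $\int_\Omega\varphi\,dx$ and by $D_s,K_s$ via \cref{lem:steady}, so $\rho_s$ is the same for any competing parameter set. The hypothesis ${\cal B}\rho_s\ne0$ is imposed for the true parameters; for the uniqueness comparison I would note that $R_s$ is a single function whose decay rate is intrinsic, so the hypothesis need only hold for the true configuration.
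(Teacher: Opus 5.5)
Your proposal is correct and follows the paper's own argument. You write the observable residual as $R_s=e^{-\gamma t}{\cal B}\rho$, split off ${\cal B}\rho_s$, control ${\cal B}(\rho-\rho_s)$ in $H^{-1}(\Omega)$ via \cref{prop:rate}, and read $\gamma$ off the logarithmic decay; your two-sided triangle-inequality bound just makes explicit the paper's expansion $e^{-\gamma t}\bigl(\|{\cal B}\rho_s\|_{H^{-1}(\Omega)}+O(e^{-\alpha t})\bigr)$.

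One caution on your closing remark. This argument does not exclude a data-consistent competitor $(D_0',K_0',\gamma_2)$ with ${\cal B}_2\rho_s=0$; it would only force $\gamma_2+\alpha_2\le\gamma_1$. So imposing the hypothesis on the true parameters guarantees that \eqref{eq:gamma_identification} returns the true $\gamma$. Uniqueness among all competitors, however, needs the hypothesis for each of them, as in your Uniqueness paragraph.
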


\subsection{Differentiability and adjoint formulation}

The forward operator $F$ is nonlinear in all variables. In this subsection, we show that it is nevertheless Fr\'echet differentiable, and we explicitly compute its Fr\'echet derivatives as well as their adjoint operators. With these analytical tools established, the adjoint state method, which is a gradient-based iterative method, can be employed to efficiently solve the inverse problem. In the following analysis, we assume that $\nabla\Phi\in L^\infty(\Omega)$.

\subsubsection{Differentiability of the forward map}

By the chain rule, we only need to prove the differentiability of $\tilde F$ and $P$ and calculate their derivatives. Since it is easy for $P$, we focus on $\tilde F$.

\begin{lemma}
    The map $\tilde F$ is Fr\'echet differentiable, and its Fr\'echet derivatives at $(D,K)\in\Int{\cal D}(\tilde{F})$ are
    \begin{equation*}
        \frac{\delta\tilde F}{\delta D}\colon L^\infty(Q_T)\to L^2(Q_T), \ g\mapsto u, \quad \frac{\delta\tilde F}{\delta K}\colon L^\infty(Q_T)\to L^2(Q_T), \ h\mapsto v
    \end{equation*}
    with $u,v$ solving the following problems respectively:
    \begin{gather}
        \begin{cases}
            \partial_t u - \div(D\nabla u) + \div(u K\nabla\Phi) = \div(g \nabla\rho) & \text{in}\,\ Q_T, \\
            \partial_\nu u = 0 & \text{on}\ \Sigma_T, \\
            u(\cdot,0) = 0 & \text{in}\ \ \Omega,
        \end{cases} \label{eq:u} \\
        \begin{cases}
            \partial_t v - \div(D\nabla v) + \div(v K\nabla\Phi) = -\div(\rho h\nabla\Phi) & \text{in}\,\ Q_T, \\
            \partial_\nu v = 0 & \text{on}\ \Sigma_T, \\
            v(\cdot,0) = 0 & \text{in}\ \ \Omega,
        \end{cases}
    \end{gather}
    where $\rho = \tilde{F}(D,K)$.
\end{lemma}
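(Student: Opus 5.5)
We argue directly from the weak formulation and the energy estimate of \cref{prop:well_posed}. Fix $(D,K)\in\Int{\cal D}(\tilde F)$ and small perturbations $(g,h)\in L^\infty(Q_T)^2$ with $(D+g,K+h)\in{\cal D}(\tilde F)$. Let $\rho=\tilde F(D,K)$ and $\tilde\rho=\tilde F(D+g,K+h)$. The candidate derivative is the linear map $(g,h)\mapsto u+v$, with $u,v$ the weak solutions of \eqref{eq:u} and of the $v$-problem. These are well defined by \cref{prop:well_posed}, applied with source $f=g\nabla\rho$ or $f=-\rho h\nabla\Phi$. Both sources lie in $L^2(Q_T)$ because $\rho\in L^2(0,T;H^1(\Omega))$ and $\nabla\Phi\in L^\infty$.

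The boundary condition needs care. \Cref{prop:well_posed} prescribes $\partial_\nu u=-f\cdot\nu$, so we work throughout with the weak formulation \eqref{eq:weak}. The flux condition is then the natural one, and $\Gamma\cdot\nu=0$ ensures consistency with \cref{rmk:Neumann}. The energy estimate also gives boundedness: $\|u+v\|_{L^2(Q_T)}\le C(\|g\|_{L^\infty}+\|h\|_{L^\infty})\|\varphi\|_{L^2}$, so the map is a bounded linear operator $L^\infty(Q_T)^2\to L^2(Q_T)$.

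\textbf{Step 1 (Lipschitz continuity).} The difference $\delta\rho\coloneqq\tilde\rho-\rho$ solves the equation with coefficients $(D+g,K+h)$, zero initial data, and source $\div(g\nabla\rho-\rho h\nabla\Phi)$ in weak form. Since $D+g\ge\lambda$ and the coefficient bounds are uniform on the domain, the energy estimate gives
\begin{equation*}
\|\delta\rho\|_{L^2(0,T;H^1(\Omega))}\le C\big(\|g\|_{L^\infty(Q_T)}+\|h\|_{L^\infty(Q_T)}\big)\|\varphi\|_{L^2(\Omega)}.
\end{equation*}
This uses the fact that $\|\rho\|_{L^2(0,T;H^1(\Omega))}\le C\|\varphi\|_{L^2(\Omega)}$.

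\textbf{Step 2 (remainder).} Set $e\coloneqq\delta\rho-u-v$. Subtracting the weak formulations shows that $e$ solves the equation with the unperturbed coefficients $(D,K)$, zero initial data, and source $\div\big(g\nabla\delta\rho-\delta\rho\, h\nabla\Phi\big)$. This is exactly the second-order term. Applying the energy estimate once more and then Step 1 gives
\begin{equation*}
\|e\|_{L^2(Q_T)}\le C\big(\|g\|_{L^\infty}+\|h\|_{L^\infty}\|\nabla\Phi\|_{L^\infty}\big)\|\delta\rho\|_{L^2(0,T;H^1)}\le C\big(\|g\|_{L^\infty}+\|h\|_{L^\infty}\big)^2\|\varphi\|_{L^2}.
\end{equation*}
Hence $\|e\|=o(\|(g,h)\|)$, which proves Fréchet differentiability. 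The partial derivatives are obtained by setting $h=0$ or $g=0$.

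\textbf{Main obstacle.} The main technical point is that the constant in \cref{prop:well_posed} must be uniform over the perturbed coefficients. This holds because it depends only on $\lambda,\Lambda,\Omega,T$, and interior points of the domain keep these bounds after a small perturbation.

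A second point is that the $L^2(Q_T)$ bound on $g\nabla\delta\rho$ requires the $L^2H^1$ control of $\delta\rho$, not merely $L^2$ control. This is why Step 1 is stated in $L^2(0,T;H^1(\Omega))$. Everything else is bookkeeping in the weak formulation.
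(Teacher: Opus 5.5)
Your proposal is correct and follows essentially the same two-step argument as the paper. First you get a Lipschitz bound on $\tilde F(D+g,K+h)-\tilde F(D,K)$ in $L^2(0,T;H^1(\Omega))$; then you apply the energy estimate again to the remainder, whose source is $\div(g\nabla\delta\rho-\delta\rho\,h\nabla\Phi)$. The only cosmetic difference is in Step 1: you use the perturbed operator with a source built from $\rho$, while the paper keeps the unperturbed operator and puts $\rho_{g,h}$ in the source; both versions need the constant in \cref{prop:well_posed} to be uniform near $(D,K)$, which you correctly note.
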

\begin{proof}
    For every $(D,K)\in\Int{\cal D}(\tilde{F})$, let $\rho_{g,h} = \tilde{F}(D+g, K+h)$ and $r_1 = \rho_{g,h} - \rho$. Then $r_1$ solves the following problem:
    \begin{equation*}
        \begin{cases}
            \partial_t r_1 - \div(D\nabla r_1) + \div(r_1 K\nabla\Phi) = \div(g\nabla\rho_{g,h} - \rho_{g,h} h\nabla\Phi) & \text{in}\,\ Q_T, \\
            \partial_\nu r_1 = 0 & \text{on}\ \Sigma_T, \\
            r_1(\cdot,0) = 0 & \text{in}\ \ \Omega.
        \end{cases}
    \end{equation*}
    By \cref{prop:well_posed} (ii) we have
    \begin{align*}
        \|r_1\|_{L^2(0,T;H^1(\Omega))} &\le C\|g\nabla\rho_{g,h} - \rho_{g,h}h\nabla\Phi\|_{L^2(Q_T)} \\
        &\le C\big(\|g\|_{L^\infty(Q_T)} + \|h\|_{L^\infty(Q_T)}\big)\|\rho_{g,h}\|_{L^2(0,T;H^1(\Omega))} \\
        &\le C\big(\|g\|_{L^\infty(Q_T)} + \|h\|_{L^\infty(Q_T)}\big),
    \end{align*}
    where the constants $C$ depend on $\lambda,\Lambda,\|\nabla\Phi\|_{L^\infty(\Omega)},\Omega,T$ and $\|\varphi\|_{L^2(\Omega)}$.
    Now let $r_2 = \rho_{g,h} - \rho - u - v$. Then $r_2$ solves the following problem:
    \begin{equation*}
        \begin{cases}
            \partial_t r_2 - \div(D\nabla r_2) + \div(r_2 K\nabla\Phi) = \div(g\nabla r_1 - r_1 h\nabla\Phi) & \text{in}\,\ Q_T, \\
            \partial_\nu r_2 = 0 & \text{on}\ \Sigma_T, \\
            r_2(\cdot,0) = 0 & \text{in}\ \ \Omega.
        \end{cases}
    \end{equation*}
    Using \cref{prop:well_posed} (ii) again we have
    \begin{align*}
        \|r_2\|_{L^2(Q_T)} &\le C\|g\nabla r_1 - r_1 h\nabla\Phi\|_{L^2(Q_T)} \\
        &\le C\big(\|g\|_{L^\infty(Q_T)} + \|h\|_{L^\infty(Q_T)}\big)^2,
    \end{align*}
    where the constants $C$ depend on $\lambda,\Lambda,\|\nabla\Phi\|_{L^\infty(\Omega)},\Omega,T$ and $\|\varphi\|_{L^2(\Omega)}$.
    Therefore, $\tilde{F}$ is Fr\'echet differentiable at $(D,K)$ and the Fr\'echet derivatives are given by $g\mapsto u$ and $h\mapsto v$.
\end{proof}

We conclude by the chain rule that
\begin{proposition}
    The map $F$ is Fr\'echet differentiable, and its Fr\'echet derivatives at $(D_0,D_s,K_0,K_s,\gamma)\in\Int{\cal D}(F)$ are given by
    \begin{alignat*}{2}
        & \frac{\delta F}{\delta D_0}\colon L^\infty(\Omega)\to L^2(Q_T),\quad
        && g_0\mapsto \frac{\delta\tilde F}{\delta D}[e^{-\gamma t} g_0], \\
        & \frac{\delta F}{\delta D_s}\colon L^\infty(\Omega)\to L^2(Q_T),
        && g_s\mapsto \frac{\delta\tilde F}{\delta D}[(1-e^{-\gamma t})g_s], \\
        & \frac{\delta F}{\delta K_0}\colon L^\infty(\Omega)\to L^2(Q_T),
        && h_0\mapsto \frac{\delta\tilde F}{\delta K}[e^{-\gamma t} h_0], \\
        & \frac{\delta F}{\delta K_s}\colon L^\infty(\Omega)\to L^2(Q_T),
        && h_s\mapsto \frac{\delta\tilde F}{\delta K}[(1-e^{-\gamma t})h_s], \\
        & \frac{\delta F}{\delta \gamma}\colon \R\to L^2(Q_T),
        && g_\gamma\mapsto -g_\gamma\left(\frac{\delta\tilde F}{\delta D}[t e^{-\gamma t}(D_0-D_s)] + \frac{\delta\tilde F}{\delta K}[t e^{-\gamma t}(K_0-K_s)]\right).
    \end{alignat*}
\end{proposition}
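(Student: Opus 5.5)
The plan is to write $F=\tilde F\circ P$ and apply the chain rule for Fréchet derivatives. We use the preceding lemma for $\tilde F$ and compute the derivative of $P$ directly.

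The first step is to show that $P\colon {\cal D}(P)\subset L^\infty(\Omega)^4\times\R\to L^\infty(Q_T)^2$ is Fréchet differentiable at interior points, and that it maps $\Int{\cal D}(P)$ into $\Int{\cal D}(\tilde F)$ (or at least into ${\cal D}(\tilde F)$, with small perturbations staying admissible).

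In the components $(D_0,D_s,K_0,K_s)$, $P$ is affine with coefficients $e^{-\gamma t}$ and $1-e^{-\gamma t}$. These are bounded by $1$ on $[0,T]$, so the partial derivatives are exactly the multiplication maps $g_0\mapsto e^{-\gamma t}g_0$, $g_s\mapsto(1-e^{-\gamma t})g_s$, and analogously for $h_0,h_s$. These maps are bounded from $L^\infty(\Omega)$ to $L^\infty(Q_T)$.

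For $\gamma$, we need the expansion in $L^\infty(Q_T)$
\begin{equation*}
e^{-(\gamma+g_\gamma)t}-e^{-\gamma t}+g_\gamma t e^{-\gamma t}=O(g_\gamma^2),
\end{equation*}
uniformly in $t\in[0,T]$. This follows from Taylor's theorem with remainder bounded by $\tfrac12 g_\gamma^2 T^2 e^{(|\gamma|+|g_\gamma|)T}$. Multiplying by $D_0-D_s$ and $K_0-K_s$, which lie in $L^\infty(\Omega)$, gives the derivative $g_\gamma\mapsto -g_\gamma\bigl(te^{-\gamma t}(D_0-D_s),\,te^{-\gamma t}(K_0-K_s)\bigr)$. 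The cross terms between $\gamma$ and the other variables, such as $(e^{-(\gamma+g_\gamma)t}-e^{-\gamma t})(g_0-g_s)$, are bilinear and hence of second order. So $P$ is jointly Fréchet differentiable, not merely partially.

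The second step is the chain rule. We need $\tilde F$ to be Fréchet differentiable at $P(D_0,D_s,K_0,K_s,\gamma)$, as provided by the lemma. We also need its full derivative to be $(g,h)\mapsto \frac{\delta\tilde F}{\delta D}[g]+\frac{\delta\tilde F}{\delta K}[h]$. This is what that lemma's proof shows, via $r_2=\rho_{g,h}-\rho-u-v$.

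The main point to check is that the image point lies in $\Int{\cal D}(\tilde F)$. If $D_0,D_s>\lambda$ strictly and the sup norms are strictly below $\Lambda$ with some margin, then $D=e^{-\gamma t}D_0+(1-e^{-\gamma t})D_s$ is a convex combination. It therefore keeps the strict lower bound and the strict upper bound, and the same holds for $K$. Hence interior points map to interior points. This holds with the interior taken in the natural sense of a margin in the bounds; I would state this interpretation explicitly.

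Composing the derivatives then yields each listed formula. The $D_0$ and $D_s$ entries come from the $D$-slot only, the $K_0$ and $K_s$ entries from the $K$-slot only, and the $\gamma$ entry is the sum of both slots, with the factor $-g_\gamma$ pulled out by linearity. The only potential obstacle is the boundary/interior bookkeeping above; the analytic content is already contained in the preceding lemma.
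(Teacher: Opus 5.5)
Your proposal is correct and follows the same route as the paper, which simply invokes the chain rule for $F=\tilde F\circ P$, calls the differentiability of $P$ easy, and relies on the preceding lemma for $\tilde F$. You also fill in details the paper leaves implicit: the $\gamma$-remainder, the bilinear cross terms, the fact that the lemma's $r_2$ estimate gives joint differentiability, and the convex-combination argument showing that $P$ maps $\Int{\cal D}(P)$ into $\Int{\cal D}(\tilde F)$.
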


\subsubsection{Adjoint operators}

To calculate the adjoint operators of the Fr\'echet derivatives of $F$, we first derive those of $\tilde{F}$.
\begin{lemma}
    The adjoint operators of the Fr\'echet derivatives of $\tilde{F}$ at $(D,K)\in\Int{\cal D}(\tilde{F})$ are given by
    \begin{equation*}
        \bigg(\frac{\delta\tilde{F}}{\delta D}\bigg)^*\colon L^2(Q_T)\to L^\infty(Q_T)^*,\ \xi\mapsto\nabla\rho\cdot\nabla w_\xi, \quad \bigg(\frac{\delta\tilde{F}}{\delta K}\bigg)^*\colon L^2(Q_T)\to L^\infty(Q_T)^*,\ \eta\mapsto -\rho\nabla\Phi\cdot\nabla w_\eta,
    \end{equation*}
    where $\rho = \tilde{F}(D,K)$, and $w_\xi$ solves the following problem:
    \begin{equation}\label{eq:w}
        \begin{cases}
            \partial_t w_\xi + \div(D\nabla w_\xi) + K\nabla\Phi\cdot\nabla w_\xi = \xi & \text{in}\,\ Q_T, \\
            \partial_\nu w_\xi = 0 & \text{on}\ \Sigma_T, \\
            w_\xi(\cdot,T) = 0 & \text{in}\ \ \Omega.
        \end{cases}
    \end{equation}
\end{lemma}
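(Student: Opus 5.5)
The plan is a duality argument. Fix $\xi\in L^2(Q_T)$ and $g\in L^\infty(Q_T)$, and let $u=\frac{\delta\tilde F}{\delta D}[g]$ solve \eqref{eq:u}. Let $w_\xi$ solve the backward problem \eqref{eq:w}. The aim is to show
\begin{equation*}
    \int_{Q_T} u\,\xi\,dx\,dt=\int_{Q_T} g\,\nabla\rho\cdot\nabla w_\xi\,dx\,dt .
\end{equation*}
This identifies $(\delta\tilde F/\delta D)^*\xi$ with the function $\nabla\rho\cdot\nabla w_\xi\in L^1(Q_T)\subset L^\infty(Q_T)^*$. The identity for $K$ follows in the same way, with the source $\div(g\nabla\rho)$ replaced by $-\div(\rho h\nabla\Phi)$.

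\textbf{Step 1: well-posedness of the adjoint problem.} Under the time reversal $s=T-t$, problem \eqref{eq:w} becomes a forward parabolic problem:
\begin{equation*}
    \partial_s\tilde w-\div(D\nabla\tilde w)-K\nabla\Phi\cdot\nabla\tilde w=-\tilde\xi,\qquad \partial_\nu\tilde w=0,\qquad \tilde w(\cdot,0)=0 .
\end{equation*}
This is in non-divergence form for the drift, but it has bounded coefficients and an $L^2$ source. The standard Galerkin energy argument used for \cref{prop:well_posed} then gives a unique weak solution $w_\xi\in L^2(0,T;H^1(\Omega))$ with $\partial_t w_\xi\in L^2(0,T;H^1(\Omega)^*)$. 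Its weak formulation is
\begin{equation*}
    -\langle\partial_t w_\xi,\zeta\rangle+\int_\Omega D\nabla w_\xi\cdot\nabla\zeta\,dx-\int_\Omega K\nabla\Phi\cdot\nabla w_\xi\,\zeta\,dx=-\int_\Omega\xi\zeta\,dx
\end{equation*}
for all $\zeta\in H^1(\Omega)$. The Neumann condition is natural here, and $\Gamma\cdot\nu=0$ ensures that no boundary term is lost.

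\textbf{Step 2: testing and time integration.} Test the weak form of \eqref{eq:u}, which is \eqref{eq:weak} with $f=-g\nabla\rho$, against $v=w_\xi(t)$:
\begin{equation*}
    \langle\partial_t u,w_\xi\rangle+\int_\Omega(D\nabla u-uK\nabla\Phi)\cdot\nabla w_\xi\,dx=\int_\Omega g\nabla\rho\cdot\nabla w_\xi\,dx .
\end{equation*}
Next test the adjoint weak form against $\zeta=u(t)$ and add the two identities. The diffusion terms $D\nabla u\cdot\nabla w_\xi$ cancel, and so do the drift terms $uK\nabla\Phi\cdot\nabla w_\xi$. This leaves
\begin{equation*}
    \langle\partial_t u,w_\xi\rangle+\langle\partial_t w_\xi,u\rangle+\int_\Omega\xi u\,dx=\int_\Omega g\nabla\rho\cdot\nabla w_\xi\,dx .
\end{equation*}

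\textbf{Step 3: the time-derivative terms.} This is the main technical point. I need the product rule
\begin{equation*}
    \frac{d}{dt}(u,w_\xi)_{L^2}=\langle\partial_t u,w_\xi\rangle+\langle\partial_t w_\xi,u\rangle
\end{equation*}
for functions in $W=\{L^2(H^1),\,\partial_t\in L^2((H^1)^*)\}$. This follows from the same embedding result as \cite[Theorem 7.100]{Salsa2022}, by density of smooth functions in $W$. Integrating over $(0,T)$, the boundary contribution $(u,w_\xi)\big|_0^T$ vanishes because $u(0)=0$ and $w_\xi(T)=0$. This yields the desired identity.

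\textbf{Step 4: the $K$-derivative and the target space.} For $\frac{\delta\tilde F}{\delta K}[h]=v$ the same computation, now with source $f=\rho h\nabla\Phi$, gives
\begin{equation*}
    \int_{Q_T} v\,\eta\,dx\,dt=-\int_{Q_T} h\,\rho\nabla\Phi\cdot\nabla w_\eta\,dx\,dt .
\end{equation*}
Finally, I must check that both $\nabla\rho\cdot\nabla w_\xi$ and $\rho\nabla\Phi\cdot\nabla w_\eta$ lie in $L^1(Q_T)$. This follows from Cauchy--Schwarz: $\rho,w\in L^2(0,T;H^1(\Omega))$ and $\nabla\Phi\in L^\infty(\Omega)$. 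Hence both expressions define bounded functionals on $L^\infty(Q_T)$, as the statement claims.
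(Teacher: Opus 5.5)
Your proposal follows the same duality argument as the paper: pair $\xi$ with $u$, substitute the backward equation \eqref{eq:w} for $\xi$, and integrate by parts in space and time, which the paper does formally. You carry it out at the weak level (well-posedness of the backward problem, the time product rule for $(u,w_\xi)_{L^2}$, and the check that the adjoint expressions lie in $L^1(Q_T)\subset L^\infty(Q_T)^*$), which the paper leaves implicit.

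There is one sign slip in Step 2. The source in \eqref{eq:u} corresponds to $f=g\nabla\rho$, and in Step 4 to $f=-\rho h\nabla\Phi$, so the tested forward identity has right-hand side $-\int_\Omega g\nabla\rho\cdot\nabla w_\xi\,dx$. The diffusion and drift terms cancel when you \emph{subtract} the tested adjoint identity, not add it, giving $\langle\partial_t u,w_\xi\rangle+\langle\partial_t w_\xi,u\rangle=\int_\Omega\xi u\,dx-\int_\Omega g\nabla\rho\cdot\nabla w_\xi\,dx$. Your displayed identity has the opposite sign on the right. Since the time-derivative terms integrate to zero over $(0,T)$, your final identities for both derivatives are still correct.
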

\begin{proof}
    By the definition of the adjoint operator, for each $\xi\in L^2(Q_T)$ and $g\in L^\infty(Q_T)$ we have
    \begin{equation*}
        \bigg<\bigg(\frac{\delta\tilde{F}}{\delta D}\bigg)^*[\xi], g\bigg> = \bigg<\xi, \frac{\delta\tilde{F}}{\delta D}[g]\bigg> = \langle\xi, u\rangle
    \end{equation*}
    with $u$ solving \eqref{eq:u}. Substituting $\xi$ using \eqref{eq:w}, we have
    \begin{align*}
        \langle\xi, u\rangle &= \int_0^{T} \int_\Omega [\partial_t w_\xi + \div(D\nabla w_\xi) + K\nabla\Phi\cdot\nabla w_\xi]u\,dx\,dt \\
        &= -\int_0^{T} \int_\Omega w_\xi [\partial_t u - \div(D\nabla u) + \div(uK\nabla\Phi)]\,dx\,dt \\
        &= -\int_0^{T} \int_\Omega w_\xi \div(g\nabla\rho)\,dx\,dt = \int_0^{T} \int_\Omega g\nabla\rho\cdot\nabla w_\xi\,dx\,dt,
    \end{align*}
    showing that $(\frac{\delta\tilde{F}}{\delta D})^*[\xi] = \nabla\rho\cdot\nabla w_\xi$. Analogously, it can be proved that $(\frac{\delta\tilde{F}}{\delta K})^*[\eta] = -\rho\nabla\Phi\cdot\nabla w_\eta$.
\end{proof}

By the chain rule again, we conclude that
\begin{proposition}
    The adjoint operators of the Fr\'echet derivatives of $F$ at $(D_0,D_s,K_0,K_s,\gamma)\in\Int{\cal D}(F)$ are given by
    \begin{alignat*}{2}
        & \left(\frac{\delta F}{\delta D_0}\right)^*\colon L^2(Q_T)\to L^\infty(\Omega)^*, \quad 
        && \xi_0\mapsto \int_0^{T} e^{-\gamma t} \nabla\rho\cdot\nabla w_{\xi_0}\,dt, \\
        & \left(\frac{\delta F}{\delta D_s}\right)^*\colon L^2(Q_T)\to L^\infty(\Omega)^*, 
        && \xi_s\mapsto \int_0^{T} (1-e^{-\gamma t}) \nabla\rho\cdot\nabla w_{\xi_s}\,dt, \\
        & \left(\frac{\delta F}{\delta K_0}\right)^*\colon L^2(Q_T)\to L^\infty(\Omega)^*, 
        && \eta_0\mapsto -\int_0^{T} e^{-\gamma t} \rho\nabla\Phi\cdot\nabla w_{\eta_0}\,dt, \\
        & \left(\frac{\delta F}{\delta K_s}\right)^*\colon L^2(Q_T)\to L^\infty(\Omega)^*, 
        && \eta_s\mapsto -\int_0^{T} (1-e^{-\gamma t}) \rho\nabla\Phi\cdot\nabla w_{\eta_s}\,dt, \\
        & \left(\frac{\delta F}{\delta \gamma}\right)^*\colon L^2(Q_T)\to\R, 
        && \xi_\gamma\mapsto \int_0^T \int_\Omega te^{-\gamma t}[-(D_0 - D_s)\nabla\rho + \rho(K_0 - K_s)\nabla\Phi]\cdot\nabla w_{\xi_\gamma}\,dx\,dt,
    \end{alignat*}
    where $\rho = F(D_0,D_s,K_0,K_s,\gamma)$ and $w_\xi$ solves \eqref{eq:w}.
\end{proposition}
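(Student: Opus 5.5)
The adjoint formulas follow from the preceding lemma on the adjoints of the Fréchet derivatives of $\tilde F$ together with the chain rule, using the explicit derivatives of $F$ from the previous proposition. Each derivative of $F$ is the composition $\frac{\delta\tilde F}{\delta D}\circ M$ or $\frac{\delta\tilde F}{\delta K}\circ M$. Here $M$ is a multiplication-type operator: $L^\infty(\Omega)\to L^\infty(Q_T)$, $g_0\mapsto e^{-\gamma t}g_0$ (and similarly with weight $1-e^{-\gamma t}$), or $\R\to L^\infty(Q_T)$, $g_\gamma\mapsto -g_\gamma t e^{-\gamma t}(D_0-D_s)$. Hence $(\delta F/\delta D_0)^*=M^*\circ(\delta\tilde F/\delta D)^*$, and the task reduces to computing $M^*$ for each weight.

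The key step is the duality computation. For $\xi\in L^2(Q_T)$ and $g_0\in L^\infty(\Omega)$, the preceding lemma gives
\begin{equation*}
\Big\langle \xi, \frac{\delta F}{\delta D_0}[g_0]\Big\rangle = \Big\langle \xi, \frac{\delta\tilde F}{\delta D}[e^{-\gamma t}g_0]\Big\rangle = \int_0^T\!\!\int_\Omega e^{-\gamma t} g_0\,\nabla\rho\cdot\nabla w_\xi\,dx\,dt.
\end{equation*}
By Fubini this equals $\int_\Omega g_0(x)\big(\int_0^T e^{-\gamma t}\nabla\rho\cdot\nabla w_\xi\,dt\big)dx$, which identifies the adjoint as the time integral in the statement. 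The same computation with weight $1-e^{-\gamma t}$ handles $D_s$. Using $(\delta\tilde F/\delta K)^*[\eta]=-\rho\nabla\Phi\cdot\nabla w_\eta$ handles $K_0$ and $K_s$.

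For $\gamma$, take $g_\gamma\in\R$ and apply linearity of $\delta F/\delta\gamma$ and both parts of the lemma:
\begin{equation*}
\Big\langle \xi, \frac{\delta F}{\delta\gamma}[g_\gamma]\Big\rangle = -g_\gamma\int_0^T\!\!\int_\Omega t e^{-\gamma t}\big[(D_0-D_s)\nabla\rho-\rho(K_0-K_s)\nabla\Phi\big]\cdot\nabla w_\xi\,dx\,dt.
\end{equation*}
The real-valued adjoint is then the stated double integral, and the sign matches.

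The only point needing care is the integrability justifying Fubini and the membership of the integrand in $L^1(\Omega)\subset L^\infty(\Omega)^*$. Since $\rho, w_\xi\in L^2(0,T;H^1(\Omega))$ by \cref{prop:well_posed}, we have $\nabla\rho\cdot\nabla w_\xi\in L^1(Q_T)$. The same holds for $\rho\nabla\Phi\cdot\nabla w_\xi$, because $\nabla\Phi\in L^\infty(\Omega)$. The weights $e^{-\gamma t}$, $1-e^{-\gamma t}$ and $te^{-\gamma t}$ are bounded on $[0,T]$. So the time integrals define $L^1(\Omega)$ functions, and all pairings are finite. Well-posedness of the backward adjoint problem \eqref{eq:w}, via the substitution $t\mapsto T-t$, was already implicitly used in the lemma, so nothing further is needed.
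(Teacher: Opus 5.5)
Your proposal is correct and follows the paper's route: the paper obtains these formulas by the chain rule, composing the adjoints of $\delta\tilde F/\delta D$ and $\delta\tilde F/\delta K$ from the preceding lemma with the adjoints of the multiplication maps induced by $P$. Your Fubini step, the integrability check ($\nabla\rho\cdot\nabla w_\xi,\ \rho\nabla\Phi\cdot\nabla w_\xi\in L^1(Q_T)$, so the time integrals lie in $L^1(\Omega)\subset L^\infty(\Omega)^*$) and your sign bookkeeping for the $\gamma$-term (a sum of the $D$- and $K$-compositions) make explicit what the paper leaves implicit, and they are correct.
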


\section{Numerical experiments}\label{sec:num}

In this section, we evaluate the performance of the proposed reconstruction
method through numerical experiments. 
The reconstruction is formulated as the minimization of the data misfit
functional
\begin{equation*}
J(D_0,D_s,K_0,K_s,\gamma)
=
\left\|
F(D_0,D_s,K_0,K_s,\gamma)
-\rho(x,t)
\right\|^2_{L^2(Q_T)}
\end{equation*}
with respect to the five unknown parameters
$(D_0,D_s,K_0,K_s,\gamma)$.

\subsection{Two-stage reconstruction method}

The analysis in \cref{sec:ip} suggests two complementary mechanisms for reducing the difficulty of the simultaneous reconstruction. 
\cref{prop:cross_sensitivity} quantifies the intrinsic coupling between the diffusion and advection coefficients, while \cref{prop:frozen_approximation,prop:stage2_sensitivity} show that the structured model naturally separates the steady and transient components across different temporal regimes. 
Motivated by these observations, we reconstruct the five unknown parameters in two stages.

For sufficiently large time, the time-dependent diffusion and advection
coefficients are well approximated by their asymptotic profiles,
\[
D(x,t)\approx D_s(x),
\qquad
K(x,t)\approx K_s(x).
\]
The late-time estimate in \cref{prop:frozen_approximation} makes this approximation quantitative: when the frozen model is initialized by the observed density at $T_2$, its forward error in a late-time observation window
\[
Q^+=\Omega\times[T_2,T]
\]
is of order $e^{-\gamma T_2}$.
This motivates Stage 1, in which the asymptotic coefficients $D_s$ and $K_s$
are reconstructed from late-time observations $\rho|_{Q^+}$. 
We denote by $F_s(D_s,K_s)=\rho^+$
the forward operator associated with the steady-regime problem \eqref{eq:frozen_forward} and minimize
\begin{equation*}
J_1(D_s,K_s)
=
\left\|
F_s(D_s,K_s)-\rho(x,t)
\right\|^2_{L^2(Q^+)}.
\end{equation*}

After obtaining the reconstructed asymptotic coefficients
$(D_s^r,K_s^r)$ from Stage 1, we fix them and return to the original
time-dependent model \eqref{eq:adv-diff}. The early-time observations contain information about
the remaining parameters
\[
(D_0,K_0,\gamma).
\]
In Stage 2, we therefore substitute $(D_s^r,K_s^r)$ into the original forward
model \eqref{eq:adv-diff} and minimize
\begin{equation*}
J_2(D_0,K_0,\gamma)
=
\left\|
F(D_0,D_s^r,K_0,K_s^r,\gamma)
-
\rho(x,t)
\right\|^2_{L^2(Q^-)},
\end{equation*}
where
\[
Q^-=\Omega\times[0,T_1]
\]
denotes the early-time observation window. We assume that
$0<T_1<T_2<T$. No data misfit is evaluated on the intermediate interval
$(T_1,T_2)$, while the observed snapshot at $T_2$ initializes Stage 1. This
choice separates the transient and asymptotic information.
Moreover, \cref{prop:stage2_sensitivity} shows that, with $D_0$, $K_0$, and $\gamma$ fixed, errors in $(D_s^r,K_s^r)$ perturb the early-time forward solution by at most a factor proportional to $1-e^{-\gamma T_1}$. 
Thus, the propagation of reconstruction errors in Stage 1 into the forward-model error in Stage 2 is quantitatively controlled.

For the spatial coefficients, we use the same weighted normalized gradient
iteration in both stages. More precisely,
\[
a\in\{D_s,K_s\}
\]
in Stage 1, whereas
\[
a\in\{D_0,K_0\}
\]
in Stage 2. For each coefficient $a$, the gradient direction is first
weighted by a spatial function $w$:
\[
\widetilde g_a^{(k)}
=
w g_a^{(k)}.
\]
We define the effective update region by
\[
\Omega_u
=
\{x\in\Omega:w(x)>0\}.
\]
The corresponding unprojected trial iterate is computed as
\begin{equation}
\widehat a^{(k+1)}
=
a^{(k)}
+
\eta_a
\frac{
\|a^{(k)}\|_{\ell^2(\Omega_u)}
}{
\|\widetilde g_a^{(k)}\|_{\ell^2(\Omega_u)}
}
\widetilde g_a^{(k)}.
\label{eq:two_stage_update}
\end{equation}

After each update, the spatial coefficient is projected according to
\begin{equation}
a^{(k+1)}
=
w\widehat a^{(k+1)}
+
(1-w)a^\dagger,
\label{eq:two_stage_projection}
\end{equation}
where $a^\dagger$ denotes the known coefficient value in the prescribed
near-boundary region. Thus, the coefficients are updated in the interior
while remaining fixed to the prescribed values near the boundary.

For the scalar parameter $\gamma$, we update it separately according to
\begin{equation}
\gamma^{(k+1)}
=
\gamma^{(k)}
+
\eta_\gamma g_\gamma^{(k)}.
\label{eq:two_stage_gamma}
\end{equation}

For both stages, the stopping criterion is met when either the maximum number of iterations is reached or the data error increases after an iteration.

The complete reconstruction procedure is summarized in
Algorithm~\ref{alg:two_stage}.

\begin{algorithm}[htbp]
\setstretch{1.35}
\caption{Two-stage reconstruction algorithm}
\label{alg:two_stage}

\begin{algorithmic}[1]

\STATE \textbf{Input:}
interior observations $\rho|_{Q_T}$,
late-time window $Q^+$, and early-time window $Q^-$.

\STATE \textbf{Stage 1: Reconstruction of asymptotic coefficients.}

\STATE Initialize $(D_s^{(0)},K_s^{(0)})$
and compute the initial data error $E_1^{(0)}$ on $Q^+$.

\FOR{$k=0,\ldots,N_1-1$}

\STATE Compute $g_{D_s}^{(k)}$ and $g_{K_s}^{(k)}$
for $J_1$ using the observations on $Q^+$.

\STATE Set
$\widetilde g_{D_s}^{(k)}=w g_{D_s}^{(k)}$
and
$\widetilde g_{K_s}^{(k)}=w g_{K_s}^{(k)}$.

\STATE Update $D_s$ and $K_s$ using
\eqref{eq:two_stage_update}.

\STATE Apply the boundary projection
\eqref{eq:two_stage_projection}.

\STATE Compute the updated data error $E_1^{(k+1)}$ on $Q^+$.

\IF{$E_1^{(k+1)}>E_1^{(k)}$}
    \STATE Restore
    $D_s^{(k+1)}=D_s^{(k)}$
    and
    $K_s^{(k+1)}=K_s^{(k)}$.
    \STATE \textbf{break}
\ENDIF

\ENDFOR

\STATE Let $k_1^\ast$ denote the final accepted iteration and set $D_s^r=D_s^{(k_1^\ast)},
\ K_s^r=K_s^{(k_1^\ast)}$.

\STATE \textbf{Stage 2: Reconstruction of the remaining parameters.}

\STATE Fix
$D_s=D_s^r$
and
$K_s=K_s^r$.

\STATE Initialize
$(D_0^{(0)},K_0^{(0)},\gamma^{(0)})$
and compute the initial data error $E_2^{(0)}$ on $Q^-$.

\FOR{$k=0,\ldots,N_2-1$}

\STATE Compute $g_{D_0}^{(k)}$ and $g_{K_0}^{(k)}$
for $J_2$ using the observations on $Q^-$.

\STATE Compute
$g_\gamma^{(k)}$.

\STATE Set
$\widetilde g_{D_0}^{(k)}=w g_{D_0}^{(k)}$
and
$\widetilde g_{K_0}^{(k)}=w g_{K_0}^{(k)}$.

\STATE Update $D_0$ and $K_0$ using
\eqref{eq:two_stage_update}.

\STATE Apply the boundary projection
\eqref{eq:two_stage_projection}.

\STATE Update $\gamma$ using
\eqref{eq:two_stage_gamma}.

\STATE Compute the updated data error $E_2^{(k+1)}$ on $Q^-$.

\IF{$E_2^{(k+1)}>E_2^{(k)}$}
    \STATE Restore
    $D_0^{(k+1)}=D_0^{(k)}$,
    $K_0^{(k+1)}=K_0^{(k)}$,
    and
    $\gamma^{(k+1)}=\gamma^{(k)}$.
    \STATE \textbf{break}
\ENDIF

\ENDFOR

\STATE Let $k_2^\ast$ denote the final accepted iteration.

\RETURN
$\bigl(
D_0^{(k_2^\ast)},
D_s^r,
K_0^{(k_2^\ast)},
K_s^r,
\gamma^{(k_2^\ast)}
\bigr)$.

\end{algorithmic}
\end{algorithm}

\subsection{Numerical results}

We now present the numerical results obtained by the proposed two-stage
reconstruction method. All experiments are performed on the computational
domain
\[
\Omega=(0,400)^2,
\]
which is discretized by a uniform triangulation associated with a
$400\times400$ rectangular grid. The time interval is $[0,1800]$, discretized
using $1200$ uniform time steps.

The light-dependent drift field $\nabla\Phi(x)$ is chosen as
\[
\nabla\Phi(x_1,x_2)
=
\left(
\frac{1}{1000}\,\psi(x_1),
\frac{1}{1000}\,\psi(x_2)
\right),
\]
where
\[
\psi(\tau)
=
\min\!\left(
\max\!\left(\frac{\tau-10}{10},0\right),
\max\!\left(\frac{390-\tau}{10},0\right),
1
\right).
\]

By construction, this vector field is conservative. Indeed, it is the gradient
of the scalar function
\[
\Phi(x_1,x_2)
=
\frac{1}{1000}
\left(
\int_0^{x_1}\psi(\tau)\,d\tau
+
\int_0^{x_2}\psi(\tau)\,d\tau
\right).
\]
Moreover, since $\psi(0)=\psi(400)=0$, we have
\[
\nabla\Phi\cdot\nu=0
\qquad
\text{on }\partial\Omega.
\]
Thus, the prescribed drift field satisfies the assumptions imposed on
$\Phi$.

The initial condition is chosen as
\[
\varphi(x_1,x_2)
=
4-\frac{x_1+x_2}{200}.
\]

Define the Gaussian function $h(x;a,b,c,d)$ by
\begin{equation*}
h(x;a,b,c,d)
\coloneqq
\exp\left[
-\left(\frac{x_1-a}{b}\right)^2
-\left(\frac{x_2-c}{d}\right)^2
\right].
\end{equation*}

The true spatial coefficients used in the numerical experiments are
\begin{align*}
D_0^\dagger(x)
&=
0.4+h(x;200,150,150,100),
&
K_0^\dagger(x)
&=
2\left(
1+2h(x;150,100,200,150)
\right),
\\
D_s^\dagger(x)
&=
0.4+h(x;250,100,250,100),
&
K_s^\dagger(x)
&=
2\left(
1+2h(x;200,150,250,100)
\right).
\end{align*}
The true scalar parameter is
\[
\gamma^\dagger=0.005.
\]

Both the forward problem and the corresponding adjoint equation are solved
using the finite element method implemented in
FEniCSx~\cite{baratta2023dolfinx}.

The reconstruction is initialized by
\begin{align*}
D_0^{(0)}(x)
&=
0.4+0.5h(x;200,150,150,100),
&
K_0^{(0)}(x)
&=
2\left(
1+h(x;150,100,200,150)
\right),
\\
D_s^{(0)}(x)
&=
0.4+0.5h(x;250,100,250,100),
&
K_s^{(0)}(x)
&=
2\left(
1+h(x;200,150,250,100)
\right).
\end{align*}
The initial value of the scalar parameter is
\[
\gamma^{(0)}=0.006.
\]

For the spatial coefficients, we use the weighted normalized gradient update
and boundary treatment described in
\eqref{eq:two_stage_update}--\eqref{eq:two_stage_projection}.
The step parameters are
\[
\eta_{D_0}=0.0052,\quad
\eta_{D_s}=0.0040,\quad
\eta_{K_0}=0.0040,\quad
\eta_{K_s}=0.0048,
\]
and
\[
\eta_\gamma=0.0024.
\]

The smooth weight function $w$ in
\eqref{eq:two_stage_projection} is
\[
w(x)
=
3s(x)^2-2s(x)^3,
\]
where
\[
s(x)
=
\min
\left\{
1,
\max
\left\{
0,
\frac{d_{\partial\Omega}(x)}{40}
\right\}
\right\}.
\]
Thus, the gradients are fully used in the interior square $[40,360]^2$ and
gradually vanish near the boundary. The reconstructed coefficients are
blended with their prescribed near-boundary values according to
\eqref{eq:two_stage_projection}. In addition, the diffusion coefficients are
projected onto the admissible set
\[
D_0,D_s\geq\lambda,
\qquad
\lambda=0.4.
\]

For Stage 1, we use the late-time observations
\[
Q^+
=
\Omega\times[1200,1800]
\]
with $T_2=1200$ to reconstruct the asymptotic coefficients $D_s$ and $K_s$.
The first stage is terminated after 107 iterations, while the maximum number of iterations is set to 120.

After obtaining $D_s^r$ and $K_s^r$, these coefficients are fixed in Stage 2.
We then use the early-time observations
\[
Q^-
=
\Omega\times[0,480]
\]
with $T_1=480$ to reconstruct $D_0$, $K_0$, and $\gamma$. The second stage is performed for 70 iterations, reaching the prescribed maximum number of iterations. In each stage, the forward and adjoint problems are solved
only over the corresponding observation interval.

The reconstruction results are shown in
Figure~\ref{fig:two_stage_results}. To evaluate the convergence of the spatial
coefficients, we use the coefficient error normalized by the corresponding
initial error:
\begin{equation*}
\mathcal E_a^{(k)}
=
\frac{
\|a^{(k)}-a^\dagger\|_{L^2(\Omega)}
}{
\|a^{(0)}-a^\dagger\|_{L^2(\Omega)}
},
\qquad
a\in\{D_0,D_s,K_0,K_s\}.
\end{equation*}

Since different observation windows are used in the two stages, the normalized
data misfit is evaluated separately on the corresponding time intervals. In
Stage 1, it is defined by
\begin{equation*}
\mathcal E_{\mathrm{data},1}^{(k)}
=
\frac{
\|F_s(D_s^{(k)},K_s^{(k)})-\rho\|_{L^2(Q^+)}
}{
\|F_s(D_s^{(0)},K_s^{(0)})-\rho\|_{L^2(Q^+)}
}.
\end{equation*}

In Stage 2, the normalized data misfit is defined by
\begin{equation*}
\mathcal E_{\mathrm{data},2}^{(k)}
=
\frac{
\left\|
F(D_0^{(k)},D_s^r,K_0^{(k)},K_s^r,
\gamma^{(k)})
-\rho
\right\|_{L^2(Q^-)}
}{
\left\|
F(D_0^{(0)},D_s^r,K_0^{(0)},K_s^r,
\gamma^{(0)})
-\rho
\right\|_{L^2(Q^-)}
}.
\end{equation*}

To quantify the final reconstruction accuracy, we compute the relative
$L^2$ error
\begin{equation*}
E_a
=
\frac{
\|a-a^\dagger\|_{L^2(\Omega)}
}{
\|a^\dagger\|_{L^2(\Omega)}
},
\qquad
a\in\{D_0,D_s,K_0,K_s\}.
\end{equation*}

The final relative errors of the reconstructed spatial coefficients are
\[
E_{D_0}=0.0996,
\qquad
E_{D_s}=0.0702,
\]
and
\[
E_{K_0}=0.0759,
\qquad
E_{K_s}=0.0639.
\]
The reconstructed scalar parameter is
\[
\gamma=0.00519.
\]

The numerical results show that the proposed two-stage strategy provides
accurate reconstructions of all five unknown parameters. The relative errors
of the four spatial coefficients are below $10\%$, while the reconstructed
relative error of $\gamma$ is less than $4\%$. These results indicate that
separating the asymptotic and transient information provides an effective
way to reduce the difficulty of the coupled reconstruction problem.

\begin{figure}[p]
\centering

\includegraphics[width=0.99\textwidth]
{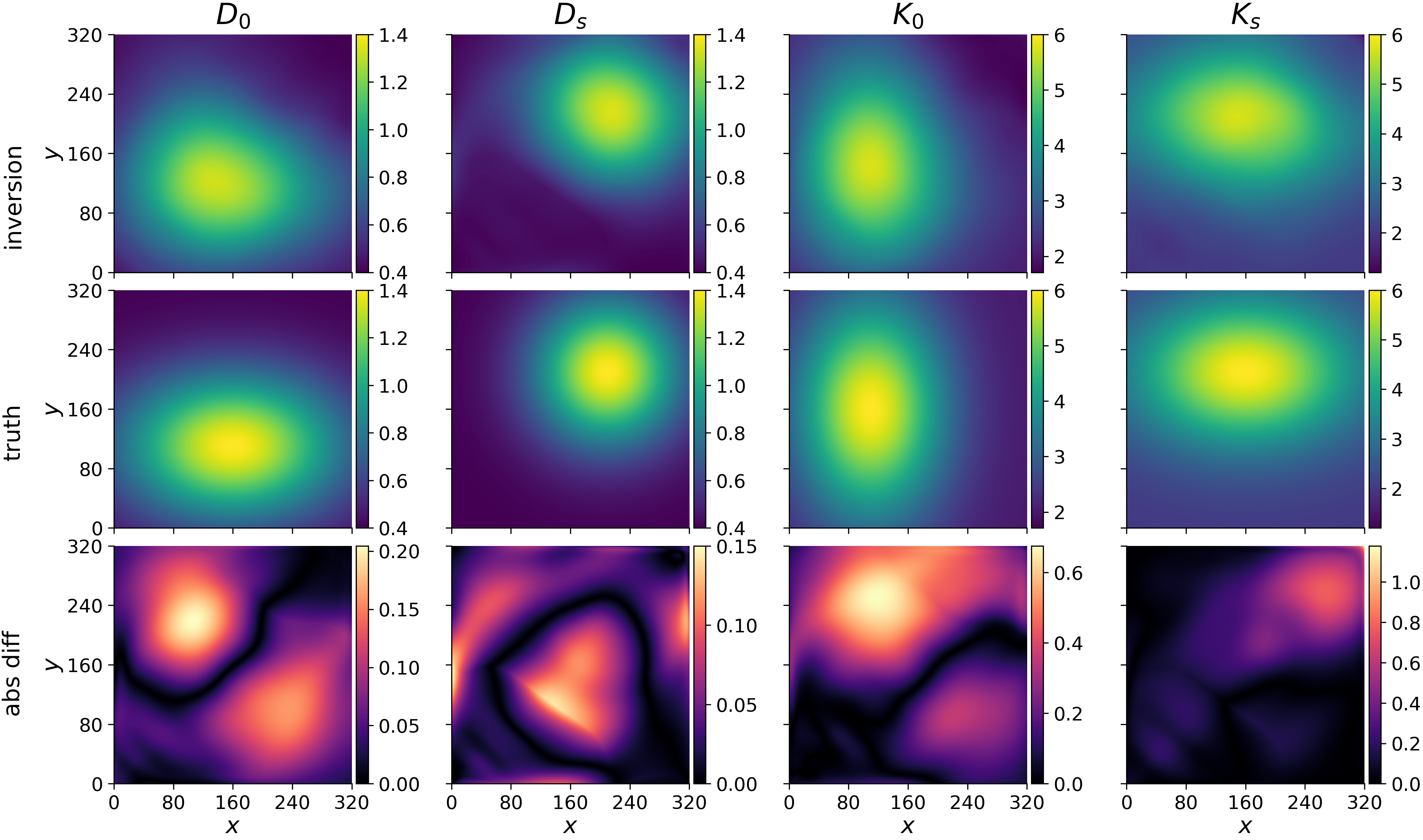}

\vspace{0.3em}

{\small
Reconstructed coefficients, exact coefficients, and absolute errors.
\par}

\vspace{0.7em}

\begin{minipage}[t]{0.485\textwidth}
    \centering
    \includegraphics[width=\linewidth]
    {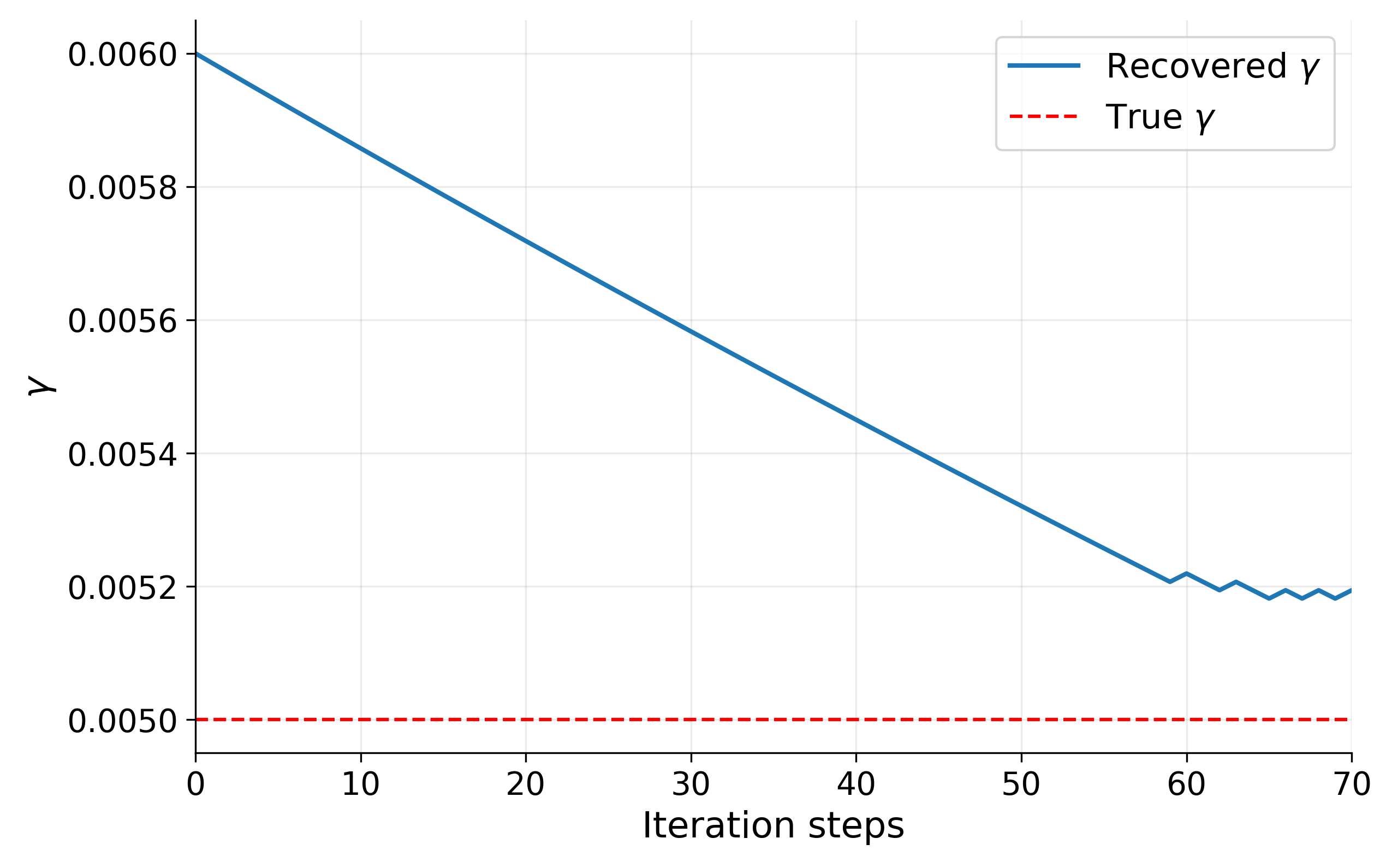}

    \vspace{0.3em}

    {\small
    Reconstruction of $\gamma$.
    \par}
\end{minipage}
\hfill
\begin{minipage}[t]{0.485\textwidth}
    \centering
    \includegraphics[width=\linewidth]
    {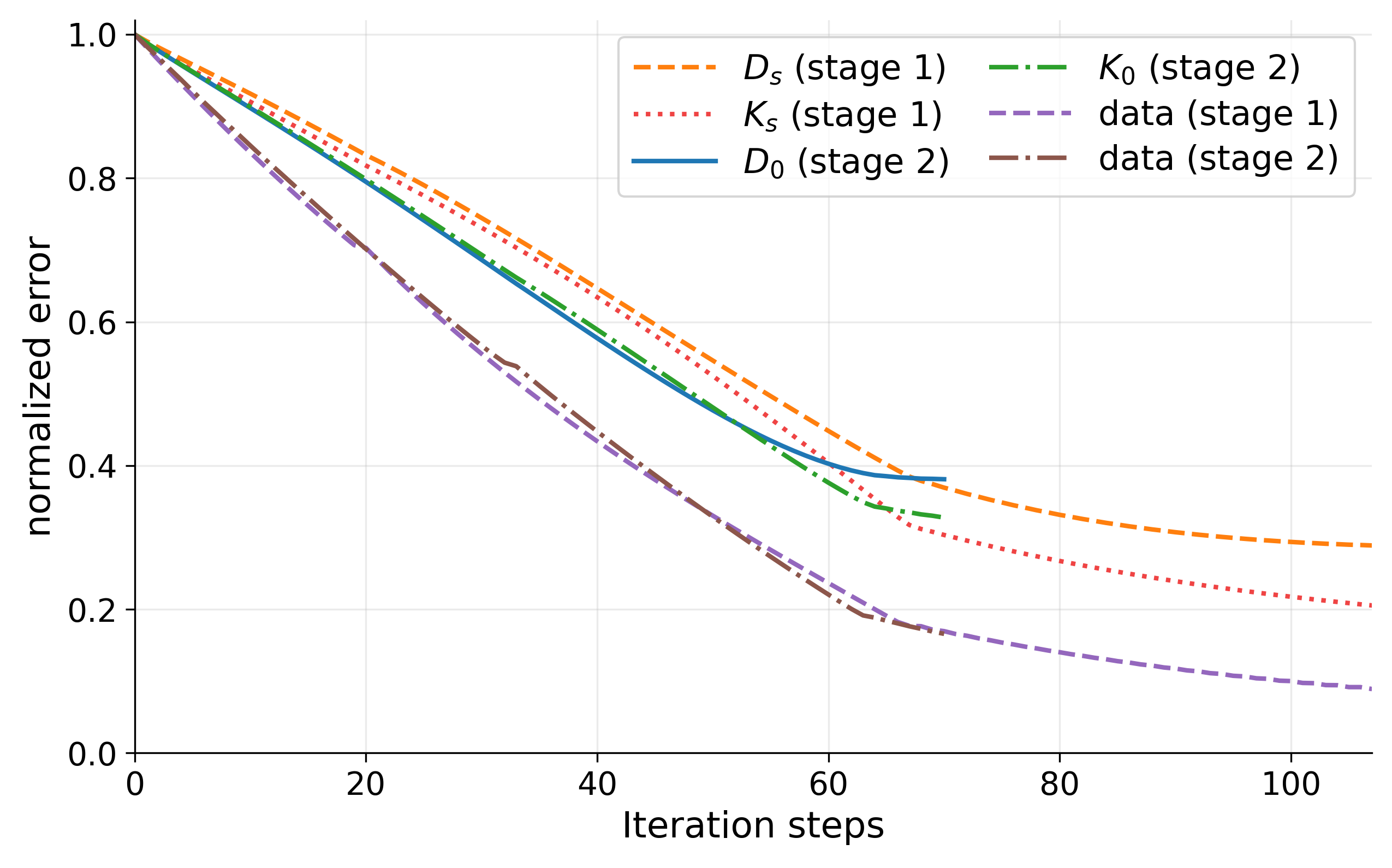}

    \vspace{0.3em}

    {\small
    Normalized coefficient errors and normalized data misfit.
    \par}
\end{minipage}

\caption{%
Numerical results obtained using the two-stage reconstruction strategy.
}
\label{fig:two_stage_results}

\end{figure}
\section{Conclusion}\label{sec:con}
In this work, we investigated an inverse problem of a phototaxis model of Keller--Segel type, with the aim of recovering the spacetime-dependent diffusion and advection coefficients from internal measurements of the population density. 
By exploiting the adaptation mechanism of the model, we considered a reduced coefficient structure in which the diffusion coefficient $D(x,t)$ and the scalar advection coefficient $K(x,t)$ relax exponentially toward their steady states.

On the theoretical side, we established conditional Lipschitz stability estimates for recovering $K$ when $D$ is known and, conversely, for recovering $D$ when $K$ is prescribed.
The two estimates exhibit different nondegeneracy mechanisms: recovery of $K$ benefits from strict positivity of the measured density, whereas recovery of $D$ requires additional control of its spatial gradient.
We further derived cross-sensitivity estimates showing how perturbations of the diffusion and advection coefficients must compensate when two coefficient pairs generate the same internal density, thereby quantifying the intrinsic coupling of the simultaneous inverse problem.

Considering the exponential coefficient structure, we identify a quantitative separation of temporal regimes on finite observation intervals. 
At late times, the full solution is approximated by the steady-coefficient model with an exponentially small error, whereas at early times the influence of perturbations in the steady coefficients is attenuated by a transient factor.
These estimates provide a model-based explanation for separating steady and transient information in time. 
In addition, we established exponential convergence of the density toward its steady state and proved that, once the steady-state coefficients are known, the relaxation rate can be uniquely identified from the long-time decay of an observable residual.
We also derived the Fr\'echet derivatives of the forward map and their corresponding adjoint operators.

Motivated by this analytical structure, we developed a two-stage gradient-based reconstruction strategy.
Late-time observations are first used to recover the steady coefficients $D_s$ and $K_s$, after which early-time observations are used to reconstruct the transient profiles $D_0, K_0$, and the relaxation rate $\gamma$.
Numerical experiments demonstrated the feasibility of this approach and showed that all the parameters can be reconstructed with good accuracy in the examples considered.

Several questions remain open. 
A natural next step is to establish stability estimates for the simultaneous recovery of the diffusion and advection parameters without assuming that either coefficient is known a priori. 
It would also be desirable to obtain a more quantitative understanding of the conditioning of the two-stage reconstruction, including the choice of temporal windows and the propagation of measurement noise. 
Finally, validating the proposed framework against experimental data is an important step toward assessing its applicability to realistic phototaxis experiments.

\bibliographystyle{plain}
\bibliography{refs} 

\end{document}